\newtheorem{theorem}{Theorem}
\newtheorem{lemma}{Lemma}
\theoremstyle{definition}
\newtheorem{definition}{Definition}
\theoremstyle{remark}
\newtheorem*{rep@theorem}{\rep@title}
\newcommand{\newreptheorem}[2]{%
\newenvironment{rep#1}[1]{%
 \def\rep@title{#2 \ref{##1}}%
 \begin{rep@theorem}}%
 {\end{rep@theorem}}}
\newcommand{\figref}[1]{Figure~\ref{fig:#1}}
\newcommand{\secref}[1]{Section~\ref{sec:#1}}
\newcommand{\appref}[1]{Appendix~\ref{app:#1}}
\newcommand{\lemref}[1]{Lemma~\ref{lem:#1}}
\newcommand{\thmref}[1]{Theorem~\ref{thm:#1}}
\newcommand{\eqnref}[1]{\eqref{eqn:#1}}
\DeclareMathOperator*{\argmin}{arg\,min}
\newcommand{\iid}[0]{i.i.d.\xspace}
\newcommand{\One}[1]{{\mathbbm{1}}\left\{{#1}\right\}}
\newcommand{\inner}[2]{\langle{#1},{#2}\rangle} 
\newcommand{\norm}[1]{\lVert{#1}\rVert}
\newcommand{\Norm}[1]{\left\lVert{#1}\right\rVert}
\newcommand{\PP}[1]{\mathbb{P}\left\{{#1}\right\}} 
\newcommand{\EE}[1]{\mathbb{E}\left[{#1}\right]} 
\def\R{\mathbb{R}}
\newcommand{\ee}[1]{\mathbf{e}_{{#1}}}
\newcommand{\ident}{\mathbf{I}}
\newcommand{\ones}{\mathbf{1}}
\newcommand{\iidsim}{\stackrel{\mathrm{iid}}{\sim}}
\newcommand{\ignore}[1]{}
\newcommand{\pr}[1]{\mathcal{P}_{{#1}}}
\newcommand{\eps}{\epsilon}
\newcommand{\thedate}{\today}
\newcommand{\theauthor}{Fan Yang and Rina Foygel Barber}
\newcommand{\thetitle}{Contraction and uniform convergence of isotonic regression}
\date{\thedate}
\author{\theauthor}
\title{\thetitle}
\affil{Department of Statistics, University of Chicago}
\newcommand{\iso}{\mathsf{iso}}
\newcommand{\sw}[1]{\Norm{#1}^{\mathsf{SW}}_{\psi}}
\newcommand{\Lip}{{L}}
\newcommand{\eiso}{\eps_{\iso}}
\newcommand{\cmin}{{c}}
\newcommand{\xh}{\widehat{x}}
\newcommand{\fh}{\widehat{f}}
\newcommand{\Gh}{\widehat{G}_n}
\newcommand{\Ggren}{\widehat{G}_{\mathsf{Gren}}}
\newcommand{\ggren}{\widehat{g}_{\mathsf{Gren}}}
\newcommand{\ck}{\mathcal{K}}
\begin{document}

\maketitle

\begin{abstract}
\, We consider the problem of isotonic regression, where the underlying signal $x$ is assumed to satisfy a monotonicity constraint, that is, $x$ lies in the cone $\{ x\in\R^n : x_1 \leq \dots \leq x_n\}$. We study the isotonic projection operator (projection to this cone), and find a necessary and sufficient condition characterizing all norms with respect to which this projection is contractive. This enables a simple and non-asymptotic analysis of the convergence properties of isotonic regression, yielding uniform confidence bands that adapt to the local Lipschitz properties of the signal.
\end{abstract}

\section{Introduction}

Isotonic regression is a powerful nonparametric tool used for estimating a monotone 
signal from noisy data. Specifically, our  data consists of 
observations $y_1,\dots,y_n\in\R$, which are assumed to be noisy observations of some 
monotone increasing signal---for instance, we might assume that $\EE{y_1}\leq \dots\leq \EE{y_n}$.
Isotonic (least-squares) regression solves the optimization problem
\[\textnormal{Minimize $\norm{y-x}^2_2$ subject to $x_1\leq \dots \leq x_n$}\]
in order to estimate the underlying signal. 

This regression problem can be viewed as a convex projection, since $\ck_{\iso} = \{x\in\R^n: x_1\leq\dots\leq x_n\}$
is a convex cone. We will write
\[\iso(y) \coloneqq  \pr{\ck_{\iso}}(y) = \argmin_{x\in\R^n}\big\{\norm{y-x}^2_2 : x_1\leq\dots\leq x_n\big\} \]
to denote the projection to this cone, which solves the least-squares isotonic regression problem. 
This projection can be computed in finite time with the Pool Adjacent Violators Algorithm (PAVA), developed by \citet{barlow1972statistical}.

In fact, mapping $y$ to $\iso(y)$ is known to also solve the isotonic binary regression problem. This arises when the data is binary,
that is,  $y\in\{0,1\}^n$. If we assume that $y_i \sim\textnormal{Bernoulli}(x_i)$, then the constrained maximum likelihood estimator
is exactly equal to the projection $\iso(y)$ (\citet{robertsonorder}). 

In this paper, we examine the properties of the isotonic projection operator $x\mapsto \iso(x)$,
with respect to different norms $\norm{\cdot}$ on $\R^n$. We examine the conditions on $\norm{\cdot}$ needed in order to ensure
that $x\mapsto\iso(x)$ is contractive with respect to this norm, and in particular, we define the new ``sliding window norm'' which measures
weighted averages over ``windows'' of the vector $x$, i.e.~contiguous stretches of the form $(x_i,x_{i+1},\dots,x_{j-1},x_j)$
for some indices $1\leq i\leq j\leq n$.
This sliding window norm then provides a tool for analyzing the convergence behavior of isotonic regression in a setting
where our data is given by $y_i = x_i + \text{noise}$. If the underlying signal $x$ is believed to be (approximately) monotone increasing,
then $\iso(y)$ will provide a substantially better estimate of $x$ than the observed vector $y$ itself.
By using our results on contractions with respect to the isotonic projection operator, 
we obtain clean, finite-sample bounds on the pointwise errors, $\big|x_i - \iso(y)_i\big|$, which are locally adaptive to the behavior
of the signal $x$ in the region around the index $i$ and hold uniformly over the entire sequence.

\subsection{Background}\label{sec:intro_refs}
There is extensive literature studying convergence rates of isotonic regression, in both finite-sample and asymptotic settings.
For an asymptotic formulation of the problem, since the signal $x\in\R^n$ must necessarily change as $n\rightarrow\infty$, a standard
method for framing this as a sequence of problems indexed by $n$
is to consider a fixed function $f:[0,1]\rightarrow\infty$, and then for each $n$, define
$x_i = f(i/n)$ (or more generally, $x_i = f(t_i)$ for points $t_i$ that are roughly uniformly spaced).
Most models in the literature assume that $y_i = x_i + \sigma\cdot \eps_i$, where the noise terms $\eps_i$ are \iid standard normal variables
 (or, more generally, are zero-mean variables that satisfy some moment assumptions or are subgaussian).

One class of existing results treats {\em global} convergence rates, where the goal is to bound the error $\norm{x - \iso(y)}_2$, or more generally
to bound $\norm{x-\iso(y)}_p$ for some $\ell_p$ norm. 
The estimation error under the $\ell_2$ norm was studied by \citet{van1990estimating,wang1996l2risk,meyer2000degrees}, among others. \citet{van1993hellinger} obtains the asymptotic risk bounds for certain `bounded' isotonic regression under Hellinger distance, whereas \citet{zhang2002risk}  establishes the non-asymptotic risk bounds for general $\ell_p$ norm---in particular, for $p=2$, they show that the least-squares estimator $\iso(y)$ of the signal $x$ has error scaling as $\norm{x-\xh}_2/\sqrt{n} \sim n^{-1/3}$. Recent work by \citet{chatterjee2015risk} considers non-asymptotic minimax rates for the estimation error, focusing specifically on $\norm{x-\xh}_2$
for any estimator $\xh$ to obtain a minimax rate.   Under a Gaussian noise model, they prove that the minimax rate scales as $\norm{x-\xh}_2/\sqrt{n} \gtrsim n^{-1/3}$
over the class of monotone and Lipschitz signals $x$, which matches the error rate of the constrained maximum likelihood estimator (i.e.~the isotonic
least-squares projection, $\iso(y)$) established earlier. They also study minimax rates in a range of settings, including piecewise constant signals, which we will discuss
later on.\footnote{\citet{chatterjee2015risk}'s  results, which they describe as ``local minimax''
bounds,
  are ``local'' in the sense that the risk bound they provide is specific to an individual signal $x\in\R^n$, but the error is nonetheless measured
with respect to the $\ell_2$ norm, i.e.~``globally'' over the entire length of the signal.}

A separate class of results considers {\em local} convergence rates, where the error at a particular index, i.e.~$\big|x_i - \iso(y)_i\big|$ for some particular $i$,
may scale differently in different regions of the vector. In an asymptotic setting, where $n\rightarrow\infty$ and the underlying signal comes from
a function $f:[0,1]\rightarrow\R$, we may consider an estimator $\fh:[0,1]\rightarrow\R$, where 
$\fh(t)$ is estimated via $\iso(y)_i$ for $t\approx i/n$.
Results in the literature for this setting
study the asymptotic rate of convergence of $\big|f(t) - \fh(t)\big|$, which depends on the local properties of $f$
near $t$.  \citet{brunk1969estimation} establishes the convergence rate as well as the limiting distribution when $f'(t)$ is positive, whereas \citet{wright1981asymptotic} generalizes the result to the case of $t$ lying in a flat region,
i.e.~$f'(t)=0$. \citet{
cator2011adaptivity} shows that the isotonic estimator adapts to the unknown function locally and is asymptotically minimax optimal for local behavior.
 Relatedly, \citet{dumbgen2003optimal} gives confidence bands in the related Gaussian white noise model, by taking averages over windows of the data curve, i.e.~ranges of the form $[t_0,t_1]$ near the point $t$ of interest.

In addition, many researchers have considered the related problem of 
monotone density estimation, where we aim to estimate a monotone decreasing density from $n$ samples drawn from that distribution. This problem was first studied by \citet{grenander1956theory}, and has attracted 
much attention since then, see \citet{rao1969estimation,groeneboom1984estimating,birge1987estimating,birge1993rates,carolan1999asymptotic,balabdaoui2011grenander,jankowski2014convergence}, among others.
\citet{birge1987estimating} proves a $n^{-1/3}$ minimax rate for the $\ell_2$ error in estimating the true monotone density $f(t)$---the same rate as 
for the isotonic regression problem.
The pointwise i.e.~$\ell_\infty$ error has also been studied---\citet{durot2012limit} shows that, for Lipschitz and bounded densities on $[0,1]$, asymptotically the error rate for estimating $f(t)$ scales
as $(n/\log(n))^{-1/3}$, uniformly over all $t$ bounded away from the endpoints.
Adaptive convergence rates are studied by \citet{cator2011adaptivity}.
Later we will show that our results yield a non-asymptotic error bound for this problem as well, which matches this known rate.

Several related problems for isotonic regression have also been studied. 
First, assuming the model $y_i = x_i + \sigma\cdot \eps_i$ for standard normal error terms $\eps_i$,
estimating $\sigma$ has been studied by \citet{meyer2000degrees}, among others.
 Estimators of $\sigma$ for general distribution of $\eps_i$ are also available, see \citet{rice1984bandwidth,gasser1986residual}.
 We discuss the relevance of these tools for constructing our confidence bands in \secref{bands}.
Second, we can hope that our estimator $\iso(y)$ can recover $x$ accurately only if $x$ itself
is monotone (or approximately monotone); thus, testing this hypothesis is important for knowing whether
our confidence band can be expected to cover $x$ itself or only its best monotone approximation, $\iso(x)$.
\citet{drton2010geometric} study the problem of testing the null hypothesis $x\in\ck_\iso$ (or more generally,
whether the signal $x$ belongs to some arbitrary pre-specified cone $\ck$), based on the volumes
of lower-dimensional faces of the cone (see \citet[Theorem 2 and Section 3]{drton2010geometric}).

\paragraph{Main contributions}
In the context of the existing literature, our main contributions are: (1) the new analysis of the contraction properties of isotonic projection, and
the specific example of the sliding window norm, and (2) clean, finite-sample estimation bounds for isotonic regression, which
 are locally adaptive to the local Lipschitz behavior of the 
underlying signal, and match known asymptotic convergence rates. The contraction and sliding window norm allow us to prove
the isotonic regression convergence results in just a few simple lines of calculations, while the arguments in the existing literature are generally substantially
more technical (for example, approximating the estimation process via a Brownian motion or Brownian bridge).

\section{Contractions under isotonic projection}
In this section, we examine the contractive behavior of the isotonic projection, 
\[\iso(x) = \argmin_{y\in\R^n}\big\{\norm{x-y}_2 : y_1\leq \dots \leq y_n\big\},\]
with respect to various norms on $\R^n$.
Since this operator projects $x$ onto a convex set (the cone $\ck_{\iso}$ of all ordered vectors), it is trivially true that
\[\norm{\iso(x)-\iso(y)}_2\leq \norm{x-y}_2,\]
but we may ask whether the same property holds when we consider norms other than the $\ell_2$ norm.

Formally, we defined our question as follows:
\begin{definition}\label{def:iso_contract}
For a seminorm $\norm{\cdot}$ on $\R^n$, we say that
isotonic projection is contractive with respect to $\norm{\cdot}$
if
\[\norm{\iso(x)-\iso(y)}\leq\norm{x-y}\text{ for all $x,y\in\R^n$.}\]
\end{definition}
We recall that a seminorm must satisfy a scaling law, $\norm{c\cdot x} = |c|\cdot\norm{x}$, and the triangle inequality, $\norm{x+y}\leq\norm{x}+\norm{y}$,
but may have $\norm{x}=0$ even if $x\neq 0$. From this point on, for simplicity, we will simply say ``norm'' to refer to any seminorm.

For which types of norms can we expect 
this contraction property to hold? To answer this question, we first define a simple property to help our analysis:
\begin{definition}\label{def:NUNA}
For a norm $\norm{\cdot}$ on $\R^n$,
we say that $\norm{\cdot}$ is nonincreasing under neighbor averaging (NUNA)
if
\begin{equation*}
\Norm{\Big(x_1,\dots,x_{i-1},\frac{x_i+x_{i+1}}{2},\frac{x_i+x_{i+1}}{2},x_{i+2},\dots,x_n\Big)}
\leq \norm{x}  
\end{equation*}
$\text{ for all } x\in\R^n\text{ and all }i=1,\dots,n-1.$
\end{definition}

Our first main result proves that the NUNA property exactly characterizes the contractive behavior of isotonic projection---NUNA
is both necessary and sufficient for isotonic projection to be contractive.
\begin{theorem}\label{thm:iso_contract}
For any norm $\norm{\cdot}$ on $\R^n$,
isotonic projection is contractive with respect to $\norm{\cdot}$
if and only if $\norm{\cdot}$   is nonincreasing under neighbor averaging (NUNA).
\end{theorem}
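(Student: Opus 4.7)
I would prove the two directions separately.

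For \emph{necessity}, I would fix $w \in \R^n$ and an index $i \in \{1, \ldots, n-1\}$ and construct $x, y$ so that $\iso(x) - \iso(y)$ equals the neighbor-averaged version of $x - y$ at position $i$. Choose $y \in \R^n$ strictly increasing on indices outside $\{i, i+1\}$, with $y_i > y_{i+1}$, and with all of the inequalities $y_1 < \cdots < y_{i-1} < (y_i + y_{i+1})/2 < y_{i+2} < \cdots < y_n$ strict (with the obvious modification at the endpoints if $i = 1$ or $i = n - 1$). Then a short inspection via PAVA (or the KKT conditions) shows that $\iso(y)$ agrees with $y$ off $\{i, i+1\}$ and equals $(y_i + y_{i+1})/2$ at both $i$ and $i+1$. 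Set $x = y + \eps w$ for $\eps > 0$ small enough that the same strict inequalities still hold for $x$; by the same analysis $\iso(x)$ pools only positions $i$ and $i+1$, and a direct calculation gives $\iso(x) - \iso(y) = \eps \cdot w^{(i)}$, where $w^{(i)}$ is the neighbor-average of $w$ at position $i$, while $x - y = \eps w$. Dividing the assumed contractivity inequality by $\eps$ yields NUNA.

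For \emph{sufficiency}, I would proceed in two steps. First, strengthen NUNA to a block-averaging bound: for any partition $P = (B_1, \ldots, B_k)$ of $\{1, \ldots, n\}$ into consecutive intervals, the operator $L_P$ that replaces each block with its mean satisfies $\norm{L_P v} \leq \norm{v}$ for all $v$. Within a single block $B$, iterating neighbor averagings restricted to $B$ corresponds to iterating a doubly stochastic matrix whose powers converge to the uniform (rank-one) matrix; the vector iterates converge (with entries outside $B$ fixed) to the constant-on-$B$ vector equal to the mean over $B$. Since NUNA makes each step norm-nonincreasing, continuity of $\norm{\cdot}$ transfers the bound to the limit. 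Averagings over disjoint blocks commute, so composing across $B_1, \ldots, B_k$ gives $L_P$ while preserving the bound.

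Second, I would exploit the polyhedral structure of $\ck_{\iso}$: the map $\iso$ is piecewise linear, since $\R^n$ decomposes into finitely many closed polyhedral cells, one per partition $P$ of $\{1, \ldots, n\}$ into consecutive intervals, on which $\iso$ acts as $L_P$. Restricting to the segment $x_t = (1 - t) x + t y$ for $t \in [0, 1]$, I would extract breakpoints $0 = t_0 < t_1 < \cdots < t_m = 1$ such that on each $[t_{l-1}, t_l]$, $\iso(x_t) = L_{P_l}(x_t)$ for some fixed partition $P_l$. Telescoping yields
\[
\iso(y) - \iso(x) = \sum_{l=1}^m \big[\iso(x_{t_l}) - \iso(x_{t_{l-1}})\big] = \sum_{l=1}^m (t_l - t_{l-1}) \, L_{P_l}(y - x),
\]
a convex combination (the weights sum to $1$). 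The triangle inequality and the block-averaging bound from the first step then give $\norm{\iso(y) - \iso(x)} \leq \sum_l (t_l - t_{l-1}) \norm{L_{P_l}(y - x)} \leq \norm{y - x}$. The main obstacle is rigorously justifying the parametric structure along the segment — finiteness of breakpoints, identification of the correct $P_l$ on each sub-interval, and consistency at transitions (guaranteed by continuity of $\iso$) — which follows from the polyhedral-cone structure of $\ck_{\iso}$ but deserves careful bookkeeping.
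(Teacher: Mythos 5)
Your proof is correct, and both directions take a genuinely different route from the paper's.  For necessity, the paper argues by contraposition: assuming NUNA fails at some $x$, it constructs a single explicit pair $(z,y)$ with $z-y=x$, $\iso(z)=z$, and $\iso(z)-\iso(y)=A_i x$, giving a violation of contractivity in one shot.  You instead run a perturbation argument, choosing a base vector $y$ with exactly one inversion at $(i,i+1)$ and perturbing by $\eps w$ with $\eps$ small; the content is equivalent, and your version is perhaps more transparent about why NUNA is forced by first-order local behavior of $\iso$.  For sufficiency the divergence is larger.  The paper introduces the ``slow PAVA'' cyclic projection (Lemma~\ref{lem:slow_pava}) — a nontrivial auxiliary result, since cyclic projection onto arbitrary convex sets need not converge to the projection onto the intersection — and shows each one-step operator $\iso_i$ is norm-nonincreasing by a four-case analysis, with NUNA entering through the identity $\iso_i(x)-\iso_i(y)=(1-t)(x-y)+tA_i(x-y)$ for some $t\in[0,1]$.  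You factor the argument quite differently: first upgrade NUNA to a block-averaging bound $\norm{L_P v}\le\norm{v}$ (iterating the $A_i$ inside a block is alternating projection onto the hyperplanes $\{v_i=v_{i+1}\}$, so convergence to the block mean is the classical subspace case of von Neumann's theorem), then exploit the piecewise-linear polyhedral structure of $\iso$ along the segment from $x$ to $y$ to write $\iso(y)-\iso(x)=\sum_l(t_l-t_{l-1})L_{P_l}(y-x)$ as a convex combination, after which the triangle inequality finishes.  This sidesteps the slow-PAVA convergence lemma entirely, replacing it with an easier fact about projections onto subspaces, at the cost of setting up the cell decomposition of $\iso$ carefully: one needs that the sets $\{v : \iso(v)=L_P v\}$ are closed polyhedral cones (the defining conditions — nondecreasing block averages, and within each block all left partial averages at least the block average — are finitely many linear inequalities), so the segment meets only finitely many cells and the breakpoints exist.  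You flag this bookkeeping yourself; once it is done, the proof is complete and arguably cleaner in structure than the paper's.
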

(The proof of \thmref{iso_contract} will be given in \secref{proof_nuna}.)

In particular, this theorem allows us to easily prove that isotonic projection
is contractive with respect to the $\ell_p$ norm for any $p\in[1,\infty]$, and more
generally as well, via the following lemma:
\begin{lemma}\label{lem:perms}
Suppose that $\norm{\cdot}$ is a norm that is invariant to permutations of the entries
of the vector, that is, for any $x\in\R^n$ and any permutation $\pi$ on $\{1,\dots,n\}$,
\[\norm{x} = \norm{x_\pi}\text{ where }x_\pi\coloneqq (x_{\pi(1)},\dots,x_{\pi(n)}).\]
(In particular, the $\ell_p$ norm, for any $p\in[1,\infty]$, satisfies this property.)
Then $\norm{\cdot}$ satisfies the NUNA property, and therefore isotonic projection is a contraction with respect to $\norm{\cdot}$.
\end{lemma}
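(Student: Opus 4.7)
The plan is to verify the NUNA property directly from permutation invariance using the triangle inequality, and then invoke Theorem~\ref{thm:iso_contract} to conclude contractivity.

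First I would fix an arbitrary $x \in \R^n$ and an index $i \in \{1,\dots,n-1\}$, and let $\pi$ be the transposition swapping $i$ and $i+1$. Then $x_\pi$ is identical to $x$ except that the $i$-th and $(i+1)$-th entries are exchanged. The key observation is that the ``averaged'' vector appearing in the NUNA inequality,
\[
z \coloneqq \Big(x_1,\dots,x_{i-1},\tfrac{x_i+x_{i+1}}{2},\tfrac{x_i+x_{i+1}}{2},x_{i+2},\dots,x_n\Big),
\]
can be written as the midpoint $z = \tfrac{1}{2}(x + x_\pi)$. This is the whole content of the argument: the ``duplicate the average'' operation is exactly averaging $x$ with its transposition.

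From there, by the triangle inequality and positive homogeneity,
\[
\norm{z} = \Norm{\tfrac{1}{2}x + \tfrac{1}{2}x_\pi} \leq \tfrac{1}{2}\norm{x} + \tfrac{1}{2}\norm{x_\pi},
\]
and permutation invariance gives $\norm{x_\pi} = \norm{x}$, so $\norm{z} \leq \norm{x}$. This is exactly the NUNA inequality. Applying \thmref{iso_contract} then yields that isotonic projection is contractive with respect to $\norm{\cdot}$.

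For the parenthetical claim that $\ell_p$ norms satisfy permutation invariance, this is immediate from the definition $\norm{x}_p = (\sum_i |x_i|^p)^{1/p}$ for $p \in [1,\infty)$ and $\norm{x}_\infty = \max_i |x_i|$, since these expressions depend only on the multiset of entries. There is no real obstacle here; the only thing to notice is the slick rewrite $z = \tfrac{1}{2}(x + x_\pi)$, after which the argument is a two-line application of the triangle inequality.
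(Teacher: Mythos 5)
Your proof is correct and matches the paper's argument essentially line for line: both identify the averaged vector as the midpoint $\tfrac{1}{2}(x+x_\pi)$ for the transposition $\pi$ swapping $i$ and $i+1$, apply the triangle inequality together with permutation invariance to get NUNA, and then invoke Theorem~\ref{thm:iso_contract}. No gaps; nothing further to add.
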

\begin{proof}[Proof of \lemref{perms}]
Let $\pi$  swap indices $i$ and $i+1$, so 
that 
\[ x_{\pi} =  (x_1,\dots,x_{i-1},x_{i+1},x_i,x_{i+2},\dots,x_n).\]
Then 
\begin{multline*}
\Norm{\Big(x_1,\dots,x_{i-1},\frac{x_i+x_{i+1}}{2},\frac{x_i+x_{i+1}}{2},x_{i+2},\dots,x_n\Big)}
\\ = \Norm{\frac{x + x_\pi}{2}} \leq \frac{1}{2}\left(\norm{x} + \norm{x_{\pi}}\right) = \norm{x},
\end{multline*}
where we apply the triangle inequality, and the assumption that $\norm{x_\pi}=\norm{x}$.
This proves that $\norm{\cdot}$ satisfies NUNA. By \thmref{iso_contract}, this implies that isotonic
projection is contractive with respect to $\norm{\cdot}$.
\end{proof}

\section{The sliding window norm}
We now introduce a {\em sliding window} norm,
which will later be a useful tool for obtaining uniform convergence guarantees for isotonic regression.
For any pair of indices $1\leq i\leq j\leq n$, we write $i:j$ to denote the stretch of $j-i+1$ many coordinates
indexed by $\{i,\dots,j\}$,
\[x = (x_1,\dots,x_{i-1},\underbrace{x_i,\dots,x_j,}_{\text{window $i$:$j$}}x_{j+1},\dots,x_n).\]
Fix any function
\begin{equation}\label{eqn:psi}
\psi:\{1,\dots,n\}\rightarrow \R_+\text{ such that $\psi$ is nondecreasing and $i\mapsto i/\psi(i)$ is concave.}\end{equation}
The sliding window norm is defined as
\[\sw{x} = \max_{1\leq i\leq j\leq n}\Big\{ \big|\overline{x}_{i:j}\big| \cdot \psi(j-i+1)\Big\},\]
where $\overline{x}_{i:j} = \frac{ x_i+\dots+x_j}{j-i+1}$ denotes the average over the window $i:j$.

The following key lemma proves that our contraction theorem, \thmref{iso_contract}, can be applied to this sliding
window norm. (This lemma, and all lemmas following, will be proved in \appref{proofs_lemmas}.)
\begin{lemma}\label{lem:SW_contract}
For any function $\psi$ satisfying the conditions~\eqnref{psi}, 
the sliding window norm $\sw{\cdot}$ satisfies the NUNA property, and therefore,
isotonic projection is contractive with respect to this norm.
\end{lemma}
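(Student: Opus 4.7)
The plan is to invoke \thmref{iso_contract} and verify NUNA for $\sw{\cdot}$ by direct computation. Fix any $i^* \in \{1,\dots,n-1\}$, set $\bar{x} \coloneqq (x_{i^*} + x_{i^*+1})/2$, and let $x'$ denote the vector obtained from $x$ by replacing both $x_{i^*}$ and $x_{i^*+1}$ by $\bar{x}$. The goal is to show that $|\overline{x'}_{i:j}|\,\psi(j-i+1) \leq \sw{x}$ for every window $i:j$.

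I would split into cases according to whether the window $i:j$ contains $i^*$, $i^*+1$, both, or neither. If it contains both (i.e., $i \leq i^*$ and $j \geq i^*+1$), then $x'_{i^*} + x'_{i^*+1} = 2\bar{x} = x_{i^*} + x_{i^*+1}$, so the window sum is preserved and $\overline{x'}_{i:j} = \overline{x}_{i:j}$; if it contains neither, the coordinates are untouched. Both are immediate. The nontrivial situation is a window containing exactly one of $\{i^*, i^*+1\}$, and by left-right symmetry it suffices to treat windows of the form $(i^*+1):j$ with $j \geq i^*+1$. Writing $m = j - i^*$, the key step is the identity
\begin{equation*}
\overline{x'}_{(i^*+1):j} \;=\; \frac{m+1}{2m}\,\overline{x}_{i^*:j} \;+\; \frac{m-1}{2m}\,\overline{x}_{(i^*+2):j},
\end{equation*}
which expresses the modified average as a convex combination of averages over windows of lengths $m+1$ and $m-1$ in the original vector $x$ (the second term being vacuous when $m=1$). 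I would verify this by matching the coefficients of $x_{i^*}+x_{i^*+1}$ and of each $x_k$ with $k \geq i^*+2$ on both sides.

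Multiplying by $\psi(m)$ and applying the triangle inequality, together with the bounds $|\overline{x}_{i^*:j}|\,\psi(m+1) \leq \sw{x}$ and $|\overline{x}_{(i^*+2):j}|\,\psi(m-1) \leq \sw{x}$, reduces the entire task to the scalar inequality
\begin{equation*}
\frac{(m+1)\,\psi(m)}{2m\,\psi(m+1)} \;+\; \frac{(m-1)\,\psi(m)}{2m\,\psi(m-1)} \;\leq\; 1.
\end{equation*}
With the substitution $g(i) \coloneqq i/\psi(i)$, both ratios simplify and this collapses cleanly to $g(m+1) + g(m-1) \leq 2\,g(m)$, which is exactly the discrete concavity of $g$ built into the hypothesis~\eqnref{psi}. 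The boundary case $m=1$ reduces to $\psi(1) \leq \psi(2)$, handled by $\psi$ being nondecreasing. The main obstacle is spotting the right convex combination: once the weights $(m\pm 1)/(2m)$ are identified, it becomes transparent why the paper's two hypotheses on $\psi$ (nondecreasing, with $i/\psi(i)$ concave) are precisely what is needed, and the rest is a short calculation.
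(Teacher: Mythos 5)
Your proof is correct and follows essentially the same route as the paper: the same case split on whether the window contains both, neither, or exactly one of the averaged indices, the same convex-combination identity with weights $(m+1)/(2m)$ and $(m-1)/(2m)$, and the same reduction to the discrete concavity of $i\mapsto i/\psi(i)$, with the boundary case $m=1$ disposed of via $\psi(1)\leq\psi(2)$. The paper happens to write out the left-endpoint case and invoke symmetry for the right, whereas you do the reverse, but that is cosmetic.
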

This lemma is a key ingredient to our convergence analyses for isotonic regression.
It will allow us to use the sliding window norm to understand the behavior of $\iso(y)$ as an estimator of $\iso(x)$,
where $y$ is a vector of noisy observations of some target signal $x$.
In particular, we will consider the special case of subgaussian noise \footnote{ We call a random variable $X$ subgaussian if $\PP{|X-\EE{X}| \geq t }\leq 2\exp(-t^2/2) $ for any $t>0$.} . The following lemma can be proved with a very
basic union bound argument:
\begin{lemma}\label{lem:SW_subg}
Let $x\in\R^n$ be a fixed vector, and let
$y_i = x_i + \sigma \eps_i$, where the $\eps_i$'s are independent, zero-mean, and subgaussian.
Then taking $\psi(i)=\sqrt{i}$, we have
\[\EE{\sw{x-y}}\leq \sqrt{2\sigma^2\log(n^2+n)}\text{ and }\EE{\big(\sw{x-y}\big)^2}\leq 8\sigma^2\log(n^2+n),\]
and for any $\delta>0$,
\[\PP{\sw{x-y}\leq\sqrt{2\sigma^2\log\left(\frac{n^2+n}{\delta}\right)}}\geq 1-\delta.\]
\end{lemma}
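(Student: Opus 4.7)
The plan is to reduce the sliding window norm with $\psi(i)=\sqrt{i}$ to a maximum of $(n^2+n)/2$ standardized sums of independent subgaussian variables, and then apply a union bound together with standard integration of Gaussian tails.

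First, I would rewrite $\sw{x-y} = \sigma\cdot \max_{1\le i\le j\le n} Z_{i,j}$, where
\[
Z_{i,j} \coloneqq \frac{1}{\sqrt{j-i+1}}\Big|\sum_{k=i}^{j}\eps_k\Big|.
\]
This follows directly from the definition, since $(x-y)_k=-\sigma\eps_k$ and $\big|\overline{(x-y)}_{i:j}\big|\cdot\sqrt{j-i+1} = \frac{\sigma}{\sqrt{j-i+1}}|\sum_{k=i}^j\eps_k|$. Because the $\eps_k$ are independent, zero-mean, and satisfy $\PP{|\eps_k|\ge s}\le 2e^{-s^2/2}$, a standard subgaussian-sum argument (Hoeffding-type bound on the MGF, which each $\eps_k$ inherits from the tail bound in the paper's definition) yields $\PP{|\sum_{k=i}^{j}\eps_k|\ge s}\le 2\exp(-s^2/(2(j-i+1)))$. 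Substituting $s = t\sqrt{j-i+1}$ gives
\[
\PP{Z_{i,j}\ge t}\leq 2e^{-t^2/2}\quad\text{for all }t>0.
\]

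Next, since there are $\binom{n}{2}+n = (n^2+n)/2$ windows $(i,j)$ with $1\le i\le j\le n$, a union bound immediately gives
\[
\PP{\sw{x-y}\geq t\sigma}= \PP{\max_{i\le j}Z_{i,j}\ge t}\leq (n^2+n)\,e^{-t^2/2}.
\]
The high-probability statement follows by setting the right-hand side equal to $\delta$ and solving for $t$, which yields $t=\sqrt{2\log((n^2+n)/\delta)}$ as required.

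The two moment bounds come from integrating this tail bound. Writing $W = \sw{x-y}/\sigma$ and $M=n^2+n$, and splitting at the threshold $t_0=\sqrt{2\log M}$ where the bound $M e^{-t^2/2}$ crosses $1$, I would use
\[
\EE{W} = \int_0^\infty \PP{W\ge t}\,\diff{t} \leq t_0 + \int_{t_0}^{\infty} M e^{-t^2/2}\diff{t} \leq \sqrt{2\log M} + \frac{M\,e^{-t_0^2/2}}{t_0},
\]
which after using $Me^{-t_0^2/2}=1$ gives $\EE{W}\le \sqrt{2\log M}$ (for $n\ge 1$, so $t_0\ge 1$, some mild bookkeeping), yielding $\EE{\sw{x-y}}\le \sqrt{2\sigma^2\log(n^2+n)}$. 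Similarly,
\[
\EE{W^2} = \int_0^\infty 2t\,\PP{W\ge t}\,\diff{t} \leq t_0^2 + \int_{t_0}^\infty 2t\cdot Me^{-t^2/2}\diff{t} = 2\log M + 2,
\]
and since $2\log M +2\leq 8\log M$ whenever $M\ge n^2+n\ge 2$, this gives $\EE{(\sw{x-y})^2}\leq 8\sigma^2\log(n^2+n)$.

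I do not expect any real obstacle; the only mild subtlety is tracking the constants in the second-moment computation to land exactly on the factor $8$, and verifying that the tail-to-MGF step is valid under the paper's tail-based definition of subgaussianity. Both are essentially bookkeeping, and the entire proof should fit in a few lines.
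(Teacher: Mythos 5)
Your high-probability bound and your second-moment bound are correct and essentially follow the paper's route (standardize each window average, union bound over the $\frac{n^2+n}{2}$ windows, then for the second moment integrate the tail with a split at $t_0=\sqrt{2\log M}$). The first-moment bound, however, has a genuine gap. Integrating the tail bound with threshold $t_0 = \sqrt{2\log M}$ gives
\[
\EE{W} \;\leq\; t_0 + \int_{t_0}^\infty M e^{-t^2/2}\,\diff{t} \;\leq\; t_0 + \frac{M e^{-t_0^2/2}}{t_0} \;=\; t_0 + \frac{1}{t_0},
\]
and the residual $1/t_0$ is strictly positive and cannot be removed by ``mild bookkeeping'': $t_0 = \sqrt{2\log M}$ is already the optimal split point (it is where the integrand $\min(1, Me^{-t^2/2})$ switches branches, and differentiating the right-hand side in $t_0$ confirms it), so no choice of threshold eliminates the extra term. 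Your argument therefore yields $\EE{\sw{x-y}} \leq \sigma\bigl(\sqrt{2\log(n^2+n)} + 1/\sqrt{2\log(n^2+n)}\bigr)$, which is strictly worse than the stated $\sqrt{2\sigma^2\log(n^2+n)}$. The paper gets the exact constant by instead quoting the standard maximal inequality $\EE{\max_{k\leq N}|Z_k|}\leq\sqrt{2\log(2N)}$, whose sharp form comes not from tail integration but from Jensen plus the moment generating function: $\exp\bigl(\lambda\,\EE{\max_k|Z_k|}\bigr)\leq \EE{\exp(\lambda\max_k|Z_k|)}\leq \sum_k\EE{e^{\lambda Z_k}}+\sum_k\EE{e^{-\lambda Z_k}}\leq 2Ne^{\lambda^2/2}$, then optimize $\lambda=\sqrt{2\log(2N)}$. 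You would need that argument (or simply cite the standard fact, as the paper does) to land on the stated bound. Finally, you correctly flag that passing from the paper's tail-based definition of subgaussian to an MGF bound for the rescaled sums is not truly free of constants; the paper glosses over the same step, so I would not hold it against you, but it is another place where the ``bookkeeping'' is not as innocuous as your write-up suggests.
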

As a specific example, in a Bernoulli model, if the signal is given by $x\in[0,1]^n$ and our observations are 
given by $y_i\sim\textnormal{Bernoulli}(x_i)$ (each drawn independently),
then this model satisfies the subgaussian noise model with $\sigma=1$.

\section{Estimation bands}\label{sec:bands}
In this section, we will develop a range of results bounding our estimation error when we observe a (nearly) monotone signal plus noise.
These results will all
use the sliding window contraction result in \lemref{SW_contract} as the main ingredient in our analysis.

We begin with a deterministic statement that is a straightforward consequence of the sliding window contraction result:
\begin{theorem}\label{thm:backbone}
For any $x,y\in\R^n$, for all indices $k=1,\dots,n$,
\begin{equation}\label{eqn:backbone_y}
\begin{split}
& \max_{1\leq m\leq k}\left\{\overline{\iso(y)}_{(k-m+1):k} - \frac{\sw{x-y}}{\psi(m)}\right\} \\ & \leq \iso(x)_k \leq \min_{1\leq m\leq n-k+1}\left\{\overline{\iso(y)}_{k:(k+m-1)} + \frac{\sw{x-y}}{\psi(m)}\right\}
\end{split}
\end{equation}
and 
\begin{equation}\label{eqn:backbone_x}
\begin{split}
& \max_{1\leq m\leq k}\left\{\overline{\iso(x)}_{(k-m+1):k} - \frac{\sw{x-y}}{\psi(m)}\right\} \\ & \leq \iso(y)_k \leq \min_{1\leq m\leq n-k+1}\left\{\overline{\iso(x)}_{k:(k+m-1)} + \frac{\sw{x-y}}{\psi(m)}\right\}.
\end{split}
\end{equation}
\end{theorem}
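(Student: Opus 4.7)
The plan is to deduce both inequalities directly from two ingredients: the contraction property established in \lemref{SW_contract} applied to the sliding window norm, and the monotonicity of the vectors $\iso(x)$ and $\iso(y)$ (both lie in the isotonic cone $\ck_\iso$). Everything else is bookkeeping.

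First I would unpack what \lemref{SW_contract} gives us. Since $\sw{\iso(x)-\iso(y)}\leq\sw{x-y}$, applying the definition of the sliding window norm to a specific window $i$:$j$ yields the pointwise-average bound
\[
\big|\overline{\iso(x)}_{i:j} - \overline{\iso(y)}_{i:j}\big| \;\leq\; \frac{\sw{x-y}}{\psi(j-i+1)}
\]
for every $1\leq i\leq j\leq n$. This is the only quantitative ingredient needed.

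Next I would exploit monotonicity. Because $\iso(x)\in\ck_\iso$ has nondecreasing coordinates, for any window of length $m$ ending at $k$, the coordinate $\iso(x)_k$ is the largest entry in that window, so $\iso(x)_k \geq \overline{\iso(x)}_{(k-m+1):k}$. Symmetrically, for any window of length $m$ starting at $k$, $\iso(x)_k \leq \overline{\iso(x)}_{k:(k+m-1)}$. Combining each of these with the contraction bound gives, for a lower bound on $\iso(x)_k$,
\[
\iso(x)_k \;\geq\; \overline{\iso(x)}_{(k-m+1):k} \;\geq\; \overline{\iso(y)}_{(k-m+1):k} - \frac{\sw{x-y}}{\psi(m)},
\]
and for an upper bound on $\iso(x)_k$,
\[
\iso(x)_k \;\leq\; \overline{\iso(x)}_{k:(k+m-1)} \;\leq\; \overline{\iso(y)}_{k:(k+m-1)} + \frac{\sw{x-y}}{\psi(m)}.
\]
Taking the maximum over $1\leq m\leq k$ in the first display and the minimum over $1\leq m\leq n-k+1$ in the second yields \eqnref{backbone_y}.

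For \eqnref{backbone_x}, I would run the exact same argument with the roles of $x$ and $y$ swapped in the monotonicity step (using that $\iso(y)$ is itself monotone), while keeping the contraction bound as is (it is symmetric in $x,y$). This produces the analogous sandwich of $\iso(y)_k$ by averages of $\iso(x)$ over windows adjacent to $k$.

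There is no real obstacle here; the only thing to be careful about is matching the window used for each direction of the inequality to the monotonicity direction (ending at $k$ for lower bounds, starting at $k$ for upper bounds) and making sure the range of $m$ is restricted so that the window stays inside $\{1,\dots,n\}$, which is exactly what the bounds $m\leq k$ and $m\leq n-k+1$ encode.
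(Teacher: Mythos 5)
Your proposal is correct and is essentially the same argument the paper gives: use monotonicity of the projected vector to bound $\iso(x)_k$ by window averages ending/starting at $k$, then apply the sliding-window contraction from \lemref{SW_contract} to pass between $\iso(x)$ and $\iso(y)$, and finally swap $x \leftrightarrow y$ for \eqnref{backbone_x}.
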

Note that these two statements are symmetric; they are identical up to reversing the roles of $x$ and $y$.
\begin{proof}[Proof of \thmref{backbone}]
We have $\iso(x)_k \geq \overline{\iso(x)}_{(k-m+1):k} \geq \overline{\iso(y)}_{(k-m+1):k} - \frac{\sw{x-y}}{\psi(m)}$,
where the first inequality uses the monotonicity of $\iso(x)$ while the second uses the definition of the sliding window norm along with the fact
that $\sw{\iso(x)-\iso(y)}\leq\sw{x-y}$ by \lemref{SW_contract}. This proves the lower bound for~\eqnref{backbone_y}; the upper bound,
and the symmetric result~\eqnref{backbone_x}, are proved analogously.
\end{proof}
This simple reformulation of our contraction result, in fact forms the backbone of all our estimation band guarantees.

These bounds bound the difference between $\iso(x)$ and $\iso(y)$, 
 computed using either $y$ (as in~\eqnref{backbone_y}) or $x$ (as in~\eqnref{backbone_x}).
Thus far, the two results are entirely symmetrical---they are the same if we swap the vectors $x$ and $y$.

We will next study the statistical setting where we aim to estimate a signal $x$ based on noisy observations $y$, 
in which case the vectors $x$ and $y$ play distinct roles, and so the two versions of the bands will carry entirely different meanings.
Before proceeding, we note that the above bounds cannot give results on $x$ itself, but only on its projection $\iso(x)$.
If $x$ is far from monotonic, we cannot hope that the monotonic vector $\iso(y)$
would give a good estimate of $x$.
We will consider a relaxed monotonicity constraint: we say that $x\in\R^n$
is $\eiso$-monotone if
\[x_i \leq x_j + \eiso \text{ for all $1\leq i\leq j\leq n$.}\]
(If $x$ is monotonic then we can simply set $\eiso=0$.)
We find that $\eiso$ corresponds roughly to
 the $\ell_{\infty}$ distance between $x$ and its isotonic projection $\iso(x)$:
\begin{lemma}\label{lem:eiso}
For any $x\in\R^n$ that is $\eiso$-monotone,
\[\norm{x - \iso(x)}_{\infty} \leq \eiso.\]
Conversely, any $x\in\R^n$ with $\norm{x-\iso(x)}_{\infty}\leq \eps$ must be $(2\eps)$-monotone.
\end{lemma}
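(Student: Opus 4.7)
The plan is to handle the two directions separately, with the converse being the easier half. For the converse, I would suppose $\norm{x-\iso(x)}_{\infty} \leq \eps$ and, for any indices $i \leq j$, decompose
\[ x_i - x_j \;=\; \bigl(x_i - \iso(x)_i\bigr) \;+\; \bigl(\iso(x)_i - \iso(x)_j\bigr) \;+\; \bigl(\iso(x)_j - x_j\bigr). \]
The middle term is nonpositive since $\iso(x) \in \ck_{\iso}$ is monotone, while the first and third terms are each at most $\eps$ in magnitude, so $x_i - x_j \leq 2\eps$. This is exactly the statement that $x$ is $(2\eps)$-monotone.

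For the forward direction, the natural tool is the classical min-max representation of isotonic projection,
\[ \iso(x)_k \;=\; \max_{1 \leq i \leq k}\, \min_{k \leq j \leq n} \overline{x}_{i:j} \;=\; \min_{k \leq j \leq n}\, \max_{1 \leq i \leq k} \overline{x}_{i:j}, \]
which is a standard fact and can be read off from PAVA. Under $\eiso$-monotonicity we have $x_\ell \leq x_k + \eiso$ whenever $\ell \leq k$, and $x_\ell \geq x_k - \eiso$ whenever $\ell \geq k$. Specializing the min-max form to the outer index $j = k$ gives
\[ \iso(x)_k \;\leq\; \max_{1 \leq i \leq k} \overline{x}_{i:k} \;\leq\; x_k + \eiso, \]
because every entry $x_\ell$ being averaged satisfies $\ell \leq k$ and hence $x_\ell \leq x_k + \eiso$. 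Symmetrically, specializing the max-min form to $i = k$ yields $\iso(x)_k \geq \min_{k \leq j \leq n} \overline{x}_{k:j} \geq x_k - \eiso$. Combining these gives $|\iso(x)_k - x_k| \leq \eiso$ for every $k$, as desired.

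The only genuine obstacle is cleanly invoking the min-max formula, but it is a well-established characterization of isotonic regression in the literature, so I would simply cite it rather than reprove it. With that identity in hand, both halves of the lemma reduce to a few lines using the triangle inequality and the definition of $\eiso$-monotonicity; no deeper machinery (in particular, no appeal to the sliding window norm or to \thmref{iso_contract}) is needed here.
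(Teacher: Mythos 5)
Your converse direction is essentially identical to the paper's: decompose $x_i - x_j$ through $\iso(x)$, use $\norm{x-\iso(x)}_\infty \leq \eps$ twice and monotonicity of $\iso(x)$ once. Your forward direction, however, takes a genuinely different route. The paper argues via the level-set structure of $\iso(x)$: given $i$, it takes $j$ to be the rightmost index in the level set containing $i$, shows $x_j \leq \iso(x)_j$ by a first-order perturbation argument (if $x_j > \iso(x)_j$, then $\iso(x) + \eps\ee{j}$ would be a strictly better isotonic point, contradicting optimality), and then uses $\eiso$-monotonicity to conclude; the symmetric lower bound uses the leftmost index of the level set. You instead invoke the classical min-max representation $\iso(x)_k = \min_{j\geq k}\max_{i\leq k}\overline{x}_{i:j} = \max_{i\leq k}\min_{j\geq k}\overline{x}_{i:j}$ and specialize the outer index to $k$; each entry in the resulting one-sided windows is within $\eiso$ of $x_k$ by $\eiso$-monotonicity, so the averages are too. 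Both proofs are correct. Your version is shorter once the min-max identity is taken as known (and the paper itself quotes exactly that identity later, when comparing with Chatterjee et al., so citing it is consistent with the paper's conventions); the paper's version is slightly longer but self-contained, using only the optimality condition for projection onto $\ck_\iso$ and requiring no external representation theorem.
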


With this in place, we turn to our results for the statistical setting.

\subsection{Statistical setting}\label{sec:model}
We will consider a subgaussian noise model, where $x\in\R^n$ is a fixed signal,
and the observation vector $y$ is generated as
\begin{equation}\label{eqn:stat_model}y_i = x_i + \sigma \eps_i, \text{ where the $\eps_i$'s are independent, zero-mean, and subgaussian}.\end{equation}
\lemref{SW_subg} proves that, in this case, setting $\psi(m)=\sqrt{m}$ 
would yield $\sw{x-y}\leq \sqrt{2\sigma^2\log\left(\frac{n^2+n}{\delta}\right)}$ with probability at least $1-\delta$.
Of course, we could consider other models as well, e.g.~involving correlated noise or heavy-tailed noise, but restrict our attention
to this simple model for the sake of giving an intuitive illustration of our results.
 
In order for this bound on the sliding window to be useful in practice, we need to obtain a bound or an estimate for the noise level $\sigma$.
Under the Bernoulli model $y_i\sim\textnormal{Bernoulli}(x_i)$,
we can simply set $\sigma=1$.
More generally, it may be possible to estimate $\sigma$ from the data itself, for instance if the noise terms $\eps_i$ are \iid~standard normal, 
\citet{meyer2000degrees} propose estimating the noise level $\sigma$ with the maximum likelihood estimator (MLE),
 $\widehat{\sigma}^2 = \frac{1}{n} \sum_i (y_i - \iso(y)_i)^2$, or the bias-corrected MLE given by 
\[\widehat{\sigma}^2 = \frac{\sum_i (y_i - \iso(y)_i)^2}{n - c_1 \cdot \mathsf{df}\big(\iso(y)\big)},\]
where $c_1$ is a known constant while $\mathsf{df}\big(\iso(y)\big)$ is the number of ``degrees of freedom'' in the monotone vector $\iso(y)$,
i.e.~the number of distinct values in this vector.

We next consider the two different types of statistical guarantees that can be obtained, using the two symmetric formulations in \thmref{backbone}
above.

\subsection{Data-adaptive bands}
We first consider the problem of providing a confidence band for the signal $x$ in a practical setting, where we can only observe the noisy data $y$
and do not have access to other information.  In this setting, the bound~\eqnref{backbone_y} in \thmref{backbone}, combined with \lemref{SW_subg}'s bound on $\sw{x-y}$ for the subgaussian model,
yields the following result:
\begin{theorem}\label{thm:subg_adapt}
For any signal $x\in\R^n$ and any $\delta>0$, under the subgaussian noise model~\eqnref{stat_model}, then with probability 
at least $1-\delta$, for all $k=1,\dots, n$,
\begin{equation}\label{eqn:subg_adapt}
\begin{split}
& \max_{1\leq m\leq k}\left\{\overline{\iso(y)}_{(k-m+1):k} - \sqrt{\frac{2\sigma^2\log\left(\frac{n^2+n}{\delta}\right)}{m}}\right\}
 \\  & \leq \iso(x)_k   \leq \min_{1\leq m\leq n-k+1}\left\{\overline{\iso(y)}_{k:(k+m-1)} + \sqrt{\frac{2\sigma^2\log\left(\frac{n^2+n}{\delta}\right)}{m}}\right\}.
 \end{split}
\end{equation}
If additionally $x$ is $\eiso$-monotone, then we also have
\begin{equation}\label{eqn:subg_adapt_eiso}
\begin{split}
& \max_{1\leq m\leq k}\left\{\overline{\iso(y)}_{(k-m+1):k} - \sqrt{\frac{2\sigma^2\log\left(\frac{n^2+n}{\delta}\right)}{m}}\right\} - \eiso \\ & 
\leq x_k \leq \min_{1\leq m\leq n-k+1}\left\{\overline{\iso(y)}_{k:(k+m-1)} + \sqrt{\frac{2\sigma^2\log\left(\frac{n^2+n}{\delta}\right)}{m}}\right\} + \eiso.
\end{split}
\end{equation} 
\end{theorem}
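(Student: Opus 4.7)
The plan is to combine the deterministic band from \thmref{backbone} with the high-probability subgaussian bound in \lemref{SW_subg}, specialized to $\psi(m) = \sqrt{m}$ (which satisfies the conditions~\eqnref{psi} since $\sqrt{m}$ is nondecreasing and $m/\sqrt{m} = \sqrt{m}$ is concave). This choice of $\psi$ is precisely the one used in \lemref{SW_subg}, so the two results are immediately compatible.

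First I would invoke the bound~\eqnref{backbone_y} from \thmref{backbone}, which gives, for every $k$ and every valid $m$, a deterministic upper and lower envelope for $\iso(x)_k$ in terms of windowed averages of $\iso(y)$ plus or minus $\sw{x-y}/\psi(m) = \sw{x-y}/\sqrt{m}$. Next, under the subgaussian model~\eqnref{stat_model}, \lemref{SW_subg} guarantees that the single random quantity $\sw{x-y}$ is bounded by $\sqrt{2\sigma^2 \log((n^2+n)/\delta)}$ with probability at least $1-\delta$. The key observation is that this is a single high-probability event which, once it holds, simultaneously validates the envelopes at \emph{every} index $k$ and \emph{every} window size $m$, because the deterministic statement of \thmref{backbone} is a universal (``for all $k,m$'') quantifier around the same quantity $\sw{x-y}$. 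Substituting the probabilistic bound in yields~\eqnref{subg_adapt}; there is no additional union bound to take over $k$ or $m$.

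For the second statement~\eqnref{subg_adapt_eiso}, I would use \lemref{eiso}: if $x$ is $\eiso$-monotone, then $\|x - \iso(x)\|_\infty \leq \eiso$, which means $\iso(x)_k - \eiso \leq x_k \leq \iso(x)_k + \eiso$ for every $k$. Substituting this two-sided bound into~\eqnref{subg_adapt} widens each side by $\eiso$ and directly yields~\eqnref{subg_adapt_eiso}, still on the same probability-$(1-\delta)$ event.

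There is no real obstacle here; the theorem is essentially a packaging of \thmref{backbone}, \lemref{SW_subg}, and \lemref{eiso}. The only thing worth being careful about is making explicit that the high-probability event from \lemref{SW_subg} controls a single scalar random variable $\sw{x-y}$, so the ``for all $k$'' uniformity in the conclusion comes for free from the deterministic theorem rather than requiring additional concentration work across indices.
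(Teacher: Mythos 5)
Your proposal is correct and follows the same route the paper intends: apply the deterministic band~\eqnref{backbone_y} from \thmref{backbone} with $\psi(m)=\sqrt{m}$, plug in \lemref{SW_subg}'s high-probability bound on the single scalar $\sw{x-y}$ (which makes the ``for all $k,m$'' uniformity automatic), and then use \lemref{eiso} to pass from $\iso(x)_k$ to $x_k$ at the cost of $\pm\eiso$. Nothing is missing.
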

We emphasize that these bounds give us a uniform confidence band for $\iso(x)$ (or for $x$ itself, if it is monotone) that can be computed
without assuming anything about the properties of the signal; for instance, we do not assume that the signal is Lipschitz with some known constant, or anything of this
sort. We only need to know the noise level $\sigma$, which can be estimated as discussed in \secref{model}.
In this sense, the bounds are data-adaptive---they are computed using the observed projection $\iso(y)$, and adapt to the properties of 
the signal (for instance, if $x$ is locally constant near $k$, then the upper and lower confidence bounds will be closer together).

{ 
\paragraph{Comparison to existing work}
The flavor of our data-adaptive band is close to that given in \citet{dumbgen2003optimal}, where the author gives confidence bands for signals in a continuous Gaussian white noise model. Although in Section 5 of \citet{dumbgen2003optimal} the result is applied to the discrete case, the confidence band there is only valid asymptotically as pointed our by the author, whereas our band is valid for finite samples. Moreover, the computation of the band in \citet{dumbgen2003optimal} involves Monte Carlo simulation to estimate several key quantiles,  and hence is much heavier than the computation of our band. Another difference is that \citet{dumbgen2003optimal} employes kernel estimators in their bands while we use the isotonic least squares estimator in our construction.
}

\subsection{Convergence rates}
While the results of \thmref{subg_adapt} give data-adaptive bounds that do not depend on properties of $x$,
from a theoretical point of view we would also like to understand how the estimation error depends on these properties.
For the data-adaptive bands, we used the result~\eqnref{backbone_y} relating $\iso(x)$ and $\iso(y)$, but for this question, we will use 
the symmetric result~\eqnref{backbone_x} instead, which immediately yields the following theorem.
\begin{theorem}\label{thm:subg_theory}
For any signal $x\in\R^n$ and any $\delta>0$, under the subgaussian noise model~\eqnref{stat_model}, then with probability 
at least $1-\delta$, for all $k=1,\dots, n$,
\begin{multline}\label{eqn:subg_theory}
-\min_{1\leq m\leq k}\left\{ \left(\iso(x)_k - \overline{\iso(x)}_{(k-m+1):k} \right) + \sqrt{\frac{2\sigma^2\log\left(\frac{n^2+n}{\delta}\right)}{m}}\right\}\\
\leq \iso(y)_k - \iso(x)_k \leq \min_{1\leq m\leq n-k+1}\left\{\left(\overline{\iso(x)}_{k:(k+m-1)}-\iso(x)_k\right) +   \sqrt{\frac{2\sigma^2\log\left(\frac{n^2+n}{\delta}\right)}{m}}\right\}.
\end{multline}
If additionally $x$ is $\eiso$-monotone, then we also have
\begin{multline}\label{eqn:subg_theory_eiso}
-\min_{1\leq m\leq k}\left\{ \left(x_k - \overline{x}_{(k-m+1):k} \right) + \sqrt{\frac{2\sigma^2\log\left(\frac{n^2+n}{\delta}\right)}{m}}\right\} - \eiso\\
\leq \iso(y)_k - x_k \leq \min_{1\leq m\leq n-k+1}\left\{\left(\overline{x}_{k:(k+m-1)}-x_k\right) +   \sqrt{\frac{2\sigma^2\log\left(\frac{n^2+n}{\delta}\right)}{m}}\right\} + \eiso.
\end{multline} 
\end{theorem}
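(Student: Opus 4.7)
The plan is to start from the symmetric backbone inequality \eqnref{backbone_x} in \thmref{backbone}, plug in the high-probability tail bound on the sliding window norm from \lemref{SW_subg} with $\psi(m)=\sqrt{m}$, and rearrange. Concretely, \lemref{SW_subg} gives that with probability at least $1-\delta$,
\[\sw{x-y}\leq\sqrt{2\sigma^2\log\!\left(\tfrac{n^2+n}{\delta}\right)},\]
and on this event we may substitute the bound into both sides of \eqnref{backbone_x}, with $\psi(m)/\psi(m)^{-1}$ giving the $1/\sqrt{m}$ factor. Subtracting $\iso(x)_k$ from the upper inequality yields the right-hand side of \eqnref{subg_theory}, and subtracting from the lower inequality (and flipping signs) yields the left-hand side. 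This directly gives the first display \eqnref{subg_theory}.

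For \eqnref{subg_theory_eiso}, I would use \lemref{eiso}, which under $\eiso$-monotonicity gives $\Norm{x-\iso(x)}_\infty\leq\eiso$. The aim is to replace every occurrence of $\iso(x)$ in \eqnref{subg_theory} by $x$, at a total cost of $\eiso$ on each side. For the upper bound I write $\iso(y)_k-x_k=(\iso(y)_k-\iso(x)_k)+(\iso(x)_k-x_k)$, bound the second term by $\eiso$, and inside the minimum rewrite
\[\overline{\iso(x)}_{k:(k+m-1)}-\iso(x)_k=\bigl(\overline{\iso(x)}_{k:(k+m-1)}-\overline{x}_{k:(k+m-1)}\bigr)+\bigl(\overline{x}_{k:(k+m-1)}-x_k\bigr)+\bigl(x_k-\iso(x)_k\bigr),\]
so the first and third bracketed terms combine with the outside $(\iso(x)_k-x_k)$ contribution to leave just one $\eiso$ correction (the averaged difference is bounded coordinate-wise by $\eiso$, canceling against the pointwise term). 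The lower bound is handled symmetrically.

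The work is essentially bookkeeping: the analytical content is already packaged in \thmref{backbone} and \lemref{SW_subg}. The only place that requires care is the $\eiso$ reduction, where one must track both the pointwise slippage $\iso(x)_k\leftrightarrow x_k$ and the averaged slippage $\overline{\iso(x)}_{\text{window}}\leftrightarrow\overline{x}_{\text{window}}$ and verify that they combine to yield a single $\eiso$ additive term (rather than $2\eiso$), using the fact that $\Norm{x-\iso(x)}_\infty\leq\eiso$ controls both the coordinate values and the window averages uniformly by the same constant.
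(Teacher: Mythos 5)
Your proof is correct and follows essentially the same route as the paper: plug the high-probability bound from \lemref{SW_subg} into the backbone inequality \eqnref{backbone_x}, subtract the appropriate constant, and use \lemref{eiso} to pass from $\iso(x)$ to $x$ at cost $\eiso$. The only cosmetic difference is that the paper subtracts $x_k$ directly from \eqnref{backbone_x} to get \eqnref{subg_theory_eiso} while you first derive \eqnref{subg_theory} by subtracting $\iso(x)_k$ and then telescope; your observation that the pointwise $\iso(x)_k-x_k$ contributions cancel, leaving only the window-average replacement $\bigl|\overline{\iso(x)}-\overline{x}\bigr|\leq\eiso$ to pay, is exactly the accounting in the paper and explains why the cost is a single $\eiso$ rather than $2\eiso$.
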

\begin{proof}[Proof of \thmref{subg_theory}]
For the first bound~\eqnref{subg_theory}, we simply subtract $\iso(x)_k$ from the inequalities~\eqnref{backbone_x}.
 For the second bound~\eqnref{subg_theory_eiso} in the case that $x$ 
is approximately monotone, we instead subtract $x_k$ from~\eqnref{backbone_x},
and also use the fact that $\norm{x-\iso(x)}_{\infty}\leq\eiso$ by \lemref{eiso},
which implies that $\big|\overline{x}_{k:(k+m-1)} - \overline{\iso(x)}_{k:(k+m-1)}\big|\leq \eiso$,
and similarly $\big| \overline{x}_{(k-m+1):k} -  \overline{\iso(x)}_{(k-m+1):k}\big|\leq \eiso$.
\end{proof}

\paragraph{Comparison to existing work}
In the monotone setting (i.e.~$x=\iso(x)$), 
\citet{chatterjee2015risk} derive related results bounding the pointwise error $\big|x_k - \iso(y)_k\big|$.
Specifically, they use the ``minmax'' formulation of the isotonic projection, $\iso(y)_k =  \min_{j\geq k}\max_{i\leq k} \overline{y}_{i:j}$,
and give the following argument:
\begin{multline*}
\iso(y)_k - x_k= \min_{1\leq m\leq n-k+1} \max_{i\leq k}\overline{y}_{i:(k+m-1)} - x_k\\
\leq \min_{1\leq m\leq n-k+1} \left\{\left(\max_{i\leq k}\overline{x}_{i:(k+m-1)} - x_k\right)  + \max_{i\leq k} \big|\overline{x}_{i:(k+m-1)} - \overline{y}_{i:(k+m-1)} \big|\right\}\\
\leq\min_{1\leq m\leq n-k+1}\bigg\{\left( \overline{x}_{k:(k+m-1)}  - x_k\right)+\underbrace{
 \max_{i\leq k} \big|\overline{x}_{i:(k+m-1)} - \overline{y}_{i:(k+m-1)} \big|}_{\text{(Err)}}\bigg\},
\end{multline*}
where the first step defines $m=j-k+1$ and uses the ``minmax'' formulation,
 while the third uses the assumption that $x$ is monotone.
They then bound the error term (Err)
in expectation. We can instead bound it as $\text{(Err)}\leq \frac{\sw{x-y}}{\sqrt{m}}$, which is exactly
the same as the upper bound in our result~\eqnref{subg_theory_eiso}. Their ``minmax'' strategy
can analogously be used to obtain the corresponding lower bound as well.

\subsection{Locally constant and locally Lipschitz signals}
If the signal $x$ is monotone, \citet{chatterjee2015risk}'s results, which are analogous to our bounds in~\eqnref{subg_theory_eiso},
 yield implications for many different classes of signals: for instance,
they show that for a piecewise constant signal $x$ taking only $s$ many unique values,
the $\ell_2$ error scales as
\[\frac{1}{n}\norm{x-\iso(y)}^2_2 \leq \frac{16 s \sigma^2}{n} \log\left(\frac{en}{s}\right).\]
We therefore see that 
\begin{equation}\label{eqn:half}\big|x_k - \iso(y)_k\big| \lesssim \sqrt{\frac{log(n)}{n}}\end{equation}
for ``most'' indices $k$ when the signal is piecewise constant.

We can instead consider a Lipschitz signal: we say that $x$ is $\Lip$-Lipschitz
if $|x_i - x_{i+1}|\leq L/n$ for all $i$. (Rescaling by $n$ is natural as we often think of $x_i = f(i/n)$ 
for some underlying function $f$). In this setting, our results in \thmref{subg_theory}
immediately yield the bound
\begin{equation}\label{eqn:Lip1}
\big|x_k - \iso(y)_k\big|
\leq \min_{1\leq m\leq k\wedge (n-k+1)}\left\{\frac{\Lip(m-1)}{2n} +   \sqrt{\frac{2\sigma^2\log\left(\frac{n^2+n}{\delta}\right)}{m}}\right\},
\end{equation}
where the term $\frac{\Lip (m-1)}{2n}$ is a bound on $\left(\overline{x}_{k:(k+m-1)}-x_k\right)$ and
$\left(x_k -\overline{x}_{(k-m+1):k}\right)$ when $x$ is $\Lip$-Lipschitz.
It's easy to see that the optimal scaling is achieved by taking $m= \left\lceil \left(\frac{n \sqrt{\sigma^2 \log\left(\frac{n^2+n}{\delta}\right)}}{\Lip}\right)^{2/3}\right\rceil$, in which case we obtain the bound
\begin{equation}\label{eqn:Lip}
\big|x_k - \iso(y)_k\big|
\leq 2\sqrt[3]{\frac{\Lip {\sigma^2 \log\left(\frac{n^2+n}{\delta}\right)}}{n}}
\end{equation}
for all $m\leq k\leq n-m+1$. (For indices $k$ nearer to the endpoints, we are forced to choose a smaller $m$,
and the scaling will be worse.)

We can also compute convergence rates in a more general setting,
where the signal $x$ is locally Lipschitz---its behavior may vary across different regions of the signal.
As discussed in \secref{intro_refs}, many papers in the literature consider asymptotic local convergence rates---local
 in the sense of giving {\em pointwise} error bounds, which for a single signal $x=(x_1,\dots,x_n)$,
 may be larger for indices $i$ falling within a region of the signal that is strictly increasing, and smaller for indices $i$ falling into a locally flat region. We would hope to see some interpolation between 
the $n^{-1/3}$ rate expected for a strictly increasing stretch of the signal, as in~\eqnref{Lip},
versus the improved parametric rate of $n^{-1/2}$ in a locally constant region as in~\eqnref{half}.
 
 Our confidence bands can also be viewed as providing
 error bounds that are local in this sense, i.e.~that adapt to the local behavior of the signal $x$ as we move from index $i=1$ to $i=n$.
To make this more precise, we will  show how our bounds scale locally with the sample size $n$ to obtain the $n^{-1/3}$ and $n^{-1/2}$ rates
described above.
Consider any monotone signal $x$. 
Suppose the signal $x$ is locally constant near $k$, with
$x_{k-cn+1}=\dots = x_k = \dots = x_{k+cn-1}$ for some positive constant $c>0$.
Then our bound~\eqnref{subg_theory_eiso}
applied with $m=cn$ yields
\begin{equation}\label{eqn:predict_onehalf}\big|x_k - \iso(y)_k\big| \lesssim \sqrt{\frac{\sigma^2\log(n)}{n}}.\end{equation}
For other indices, however, where the signal is locally strictly increasing with a Lipschitz constant $\Lip$, then 
taking $m\sim\left(\frac{\sigma^2 n \log(n)}{\Lip}\right)^{2/3}$ yields the $n^{-1/3}$ scaling obtained
above in~\eqnref{Lip}.
It is of course also possible to achieve an interpolation between the $n^{-1/2}$ and
$n^{-1/3}$ rates via our results, as well.

Many works in the literature consider the local adaptivity problem in an asymptotic setting; here
we will describe the results of \citet{cator2011adaptivity}. 
Consider an asymptotic setting where the signal $x=(x_1,\dots,x_n)$ comes from measuring (at $n$ many points) a
monotone function $f:[0,1]\rightarrow \R$, and we are interested in the local convergence rate at some
fixed $t\in(0,1)$.  \citet{cator2011adaptivity} show that if the first $\alpha$ derivatives of $f$ at $t$ satisfy $f^{(1)}(t)=\dots = f^{(\alpha-1)}(t)=0$  and $f^{(\alpha)}(t)>0$, 
the convergence rate for estimating $f(t)$ scales as $n^{-\alpha/(2\alpha+1)}$. 
In particular, if $\alpha=1$ ($f$ is strictly increasing at $t$)
then they obtain the $n^{-1/3}$ rate seen before,
 while if $\alpha=\infty$ ($f$ is locally constant near $t$) then they obtain the faster
 parametric $n^{-1/2}$ rate. 
Of course, any $\alpha$ in between $1$ and $\infty$ will produce some power of $n$ between these two.
Our work can be viewed as a finite-sample version of these types of results.

\subsection{Convergence rates in the $\ell_2$ norm}
We next show that the tools developed in this paper can be used to yield a bound on the $\ell_2$ error, achieving the same $n^{-1/3}$ scaling
as in \citet{chatterjee2015risk}. While achieving an $n^{-1/3}$ elementwise requires a Lipschitz condition on the signal (as in our result~\eqnref{Lip} above),
here we do not assume any Lipschitz conditions and require only a bound on the total variation,
\[V \coloneqq \iso(x)_n - \iso(x)_1.\]
Our proof uses similar techniques as \citet{chatterjee2015risk}'s result.

\begin{theorem}\label{thm:L2}
For any signal $x\in\R^n$, under the subgaussian noise model~\eqnref{stat_model}, we have
\[\frac{1}{n}\norm{\iso(y) - \iso(x)}^2_2 \leq  48\left(\frac{V\sigma^2\log(2n)}{n}\right)^{2/3} +\frac{96\sigma^2\log^2(2n)}{n}.\]
\end{theorem}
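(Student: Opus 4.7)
The plan is to combine the deterministic pointwise bound from \thmref{backbone} (inequality~\eqnref{backbone_x}) with a block decomposition of $\iso(x)$ based on its values. Write $z = \iso(x)$ and $W = \sw{x-y}$, so that for every $k$ and every valid window size $m$, \thmref{backbone} gives
\begin{align*}
\iso(y)_k - z_k &\leq (\overline{z}_{k:(k+m-1)} - z_k) + W/\sqrt{m},\\
z_k - \iso(y)_k &\leq (z_k - \overline{z}_{(k-m+1):k}) + W/\sqrt{m}.
\end{align*}

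First, I would partition $\{1,\ldots,n\}$ into consecutive blocks $B_1,\ldots,B_K$ so that $z$ varies by at most a chosen resolution $\delta>0$ on each block; since $z$ is monotone with $z_n - z_1 = V$, this requires $K \leq V/\delta + 1$ blocks. For each $k$ in a block $B_j = [k_j,k_{j+1}-1]$ of size $b_j$, I would apply the above bounds with right window size $m_k^+ = k_{j+1}-k$ and left window size $m_k^- = k-k_j+1$, both of which lie inside $B_j$, so that both biases are at most $\delta$. Using the identity $a^2 = a_+^2+a_-^2$ (valid for any real $a$) together with $(u+v)^2 \leq 2(u^2+v^2)$, this gives
\[(\iso(y)_k - z_k)^2 \leq 4\delta^2 + 2W^2/m_k^+ + 2W^2/m_k^-.\]

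Since $\{m_k^+ : k\in B_j\} = \{m_k^- : k\in B_j\} = \{1,2,\ldots,b_j\}$, summing over $k\in B_j$ yields $4 b_j \delta^2 + 4 W^2 H_{b_j}$, where $H_b = \sum_{\ell=1}^b 1/\ell \leq \log(eb)$. Summing over blocks gives
\[\|\iso(y)-z\|_2^2 \leq 4n\delta^2 + 4W^2(V/\delta + 1)\log(en).\]
Finally, I would optimize $\delta$ by setting $\delta \sim (W^2V\log n/n)^{1/3}$, which balances the first two terms and produces a main term of order $n^{1/3}(W^2 V)^{2/3}\log^{2/3}n$ plus an additive $O(W^2\log n)$ term. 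Taking expectations, using \lemref{SW_subg} to bound $\EE{W^2} \leq 8\sigma^2\log(n^2+n)$, and applying Jensen's inequality ($\EE{W^{4/3}}\leq (\EE{W^2})^{2/3}$), then dividing by $n$, yields a bound of the form $C_1(V\sigma^2\log(2n)/n)^{2/3} + C_2 \sigma^2\log^2(2n)/n$, matching the claimed rate.

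\textbf{Main obstacle.} The harmonic sum $H_{b_j}$ arising from confining windows inside blocks introduces an extra $\log$ factor in the main term. Recovering the sharp constants $48$ and $96$ (and keeping $\log^{2/3}$ rather than $\log^{4/3}$ in the main term) requires a sharper argument that avoids this harmonic sum---for example, by using a uniform window size of order $b_j$ per block and carefully controlling the bias of windows that extend slightly past block boundaries, or by refining the block decomposition to also split at the large jumps of $z$.
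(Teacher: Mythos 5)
Your block decomposition is in the same spirit as the paper's, but the per-index application of \thmref{backbone} creates a genuine gap that you correctly diagnose at the end: each block contributes a harmonic sum $H_{b_j}\lesssim\log n$ multiplying $W^2$, so after optimizing $\delta$ and applying Jensen your main term carries $(\log n)^{4/3}$ rather than the $(\log n)^{2/3}$ in the claimed bound. The paper avoids this by \emph{not} using per-index windows. It first truncates $\iso(y)$ to $z = \pr{A}(\iso(y))$ where $A=[\iso(x)_1,\iso(x)_n]^n$, so that $\iso(x)+z$ is monotone with total variation at most $2V$; it then chooses blocks on which $\iso(x)+z$ (not $\iso(x)$ alone) has variation $\leq 2V/M$, and on each block $I_m$ decomposes $\norm{(z-\iso(x))_{I_m}}_2$ into the block-mean error --- controlled by \emph{one} application of the sliding-window inequality at window size $|I_m|$ --- plus the deviation of $z-\iso(x)$ from its block mean, which is controlled by the $2V/M$ oscillation of both $z$ and $\iso(x)$ on the block. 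This yields a contribution of $2W^2$ per block instead of $W^2 H_{b_j}$, removing the extra $\log$ from the main term; the truncation error $\norm{\iso(y)-z}_2^2$ is handled separately by a harmonic sum near the two endpoints, and that harmonic sum is precisely the source of the secondary $\sigma^2\log^2(2n)/n$ term.

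Your first proposed fix (a uniform window of order $b_j$ per block) is the right instinct, but on its own it only controls the block mean; to bound the full $\ell_2$ norm on the block you also need to control how much $\iso(y)$ itself oscillates within the block, which is exactly what the truncation to $z$ and the partition based on $\iso(x)+z$ accomplish. Without some version of that step, the mean-plus-deviation decomposition does not close.
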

As long as $n\gg \frac{\sigma^2\log^4(2n)}{V^2}$, the first term is the dominant one, matching the result of \citet[Theorem 4.1]{chatterjee2015risk} with a slight improvement in the log term. (The constants in this result are of course far from optimal.) This result is 
proved in \appref{proof_L2}.

\subsection{Simulation: local adaptivity}
To demonstrate this local adaptivity in practice, we run a simple simulation. The signal is generated from an underlying function $f(t)$
defined over $t\in[0,1]$, with
\[f(t) = \begin{cases}-10,&0\leq t\leq 0.3,\\
\text{linearly increasing from $-10$ to $10$},&0.3\leq t\leq 0.7,\\
10,&0.7\leq t \leq 1,\end{cases}\]
as illustrated in \figref{sim_signal}(a).
For a fixed sample size $n$, we set $x_i = f\left(\frac{i}{n+1}\right)$ and $y_i = x_i + N(0,1)$. We then compute a data-adaptive confidence band
as given in~\eqnref{subg_adapt_eiso}, with known noise level $\sigma=1$, with target coverage level $1-\delta = 0.9$, and with $\eiso=0$ as the signal $x$ is known to be monotone.
For sample size $n=1000$, the resulting estimate $\iso(y)$ and confidence band are illustrated in \figref{sim_signal}(b).

We then repeat this experiment at sample sizes $n=700,701,702,\dots,1000$. For each sample size $n$, we take the mean width of the confidence
band averaged over (a) the locally constant (``flat'') regions of the signal, defined by all indices $i$ corresponding to values $t\in[0.1,0.2]\cup[0.8,0.9]$, and (b) a strictly increasing
region, defined by indices $i$ corresponding to $t\in[0.4,0.6]$. (These regions are illustrated in \figref{sim_signal}(a).)

\begin{figure}
\centering
\begin{tabular}{cc}
\includegraphics[width=0.5\textwidth]{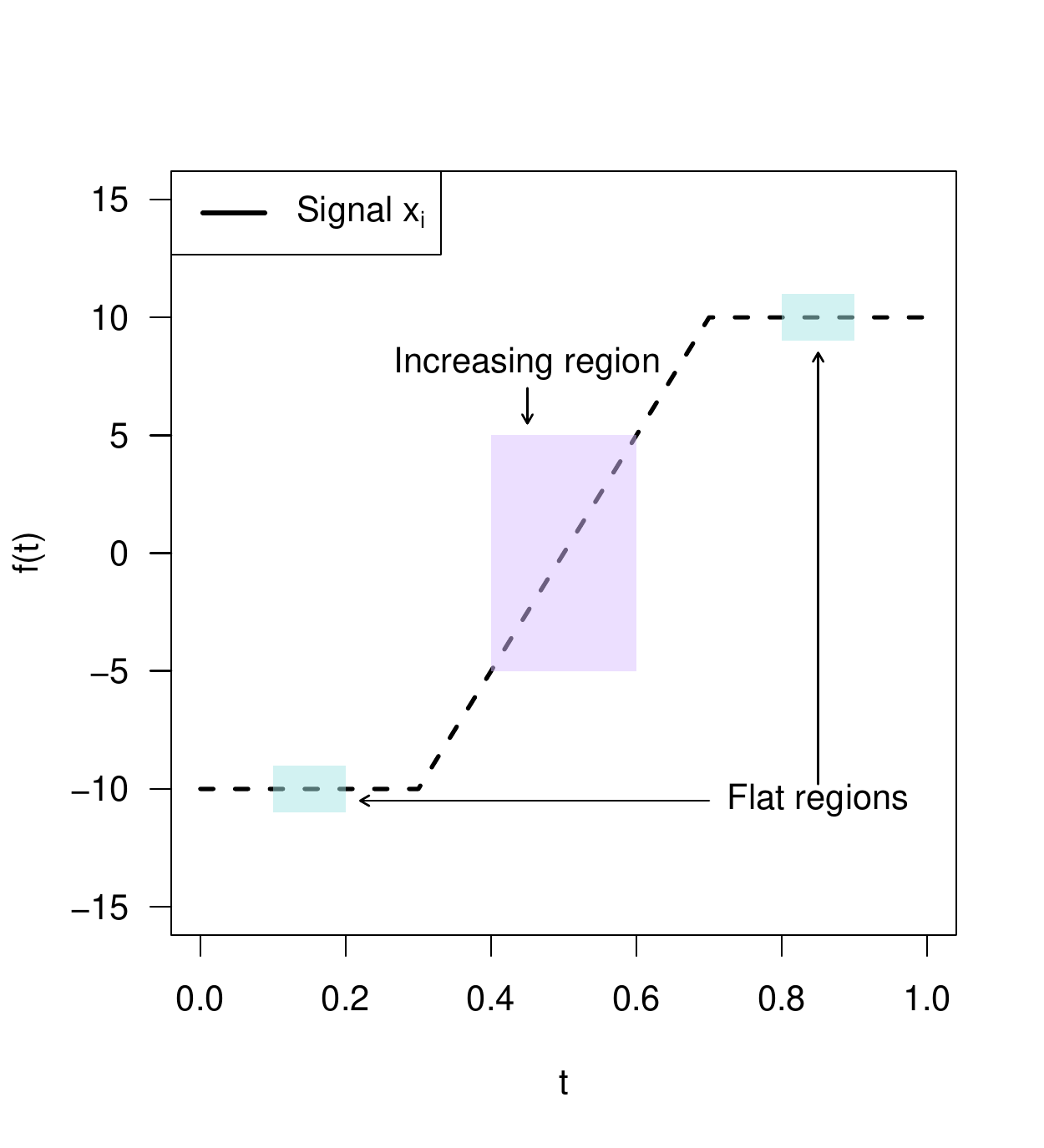}&
\includegraphics[width=0.5\textwidth]{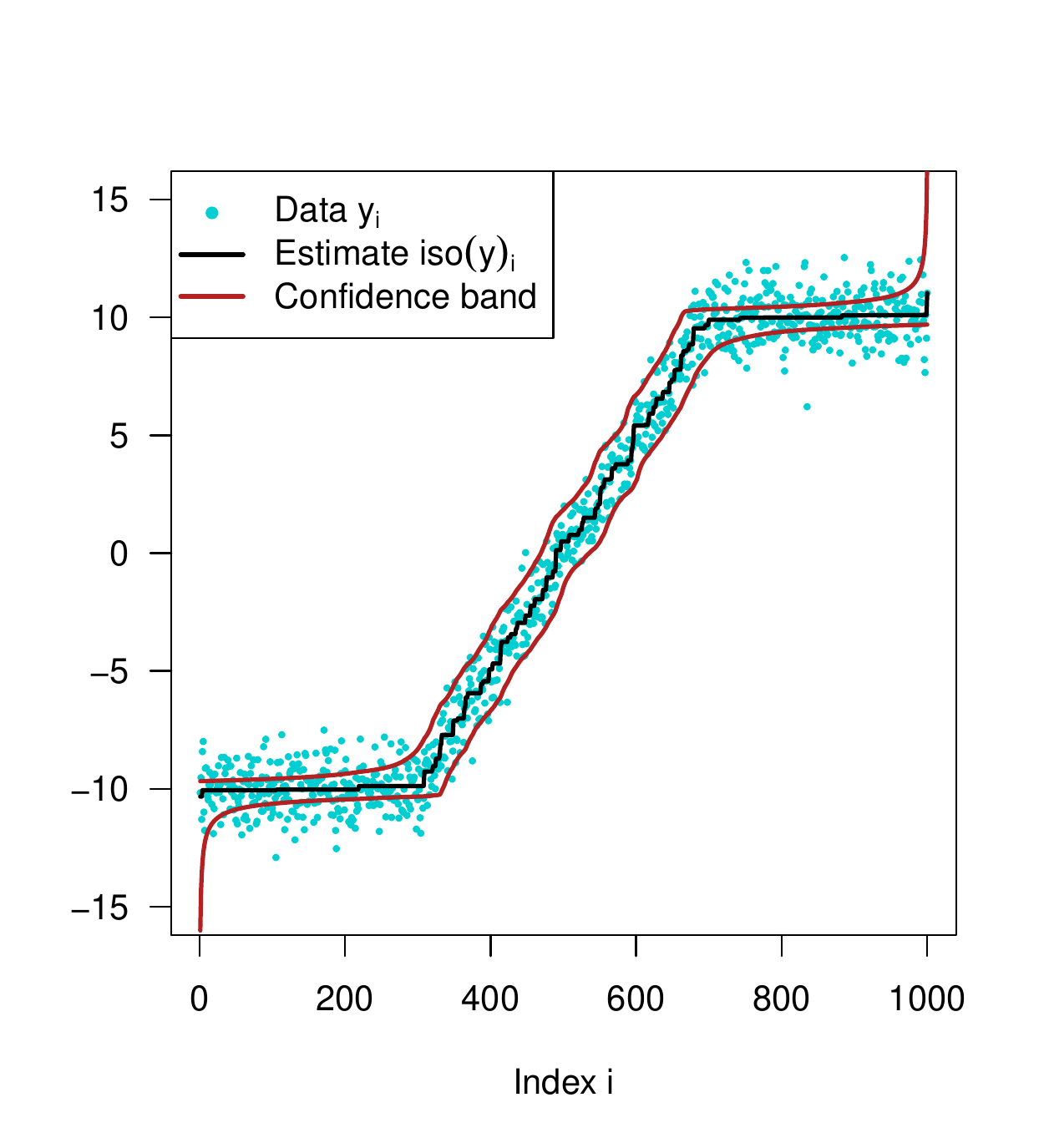}\\
(a) Signal&(b) Results for $n=1000$
\end{tabular}
\caption{(a) The function $f(t)$ used to generate signals $x\in\R^n$ for various $n$, with flat and increasing regions highlighted.
(b) At sample size $n=1000$, the observed data $y$, estimated signal $\iso(y)$, and data-adaptive confidence band computed as in~\eqnref{subg_adapt}.}
\label{fig:sim_signal}
\end{figure}

Our theory predicts that the mean confidence band width scales as $\sim \sqrt{\frac{\log(n)}{n}}$ in the flat regions, and $\sim \sqrt[3]{\frac{\log(n)}{n}}$ in the increasing
region. To test this, we take a linear regression of the log of the mean confidence band width against $\log\left(\frac{n}{\log(n)}\right)$, 
and find a slope $\approx -1/2$ in the flat regions and $\approx -1/3$ in the increasing region, confirming our theory. These results are illustrated in \figref{sim_scaling}.

Note that our data-adaptive estimation bands given by \thmref{subg_adapt} are calculated
without using prior knowledge of the signal's local behavior (locally constant / locally Lipschitz)---the confidence bands computed
in \thmref{subg_adapt} are able to adapt to this unknown structure automatically.

 We next check the empirical coverage level of these confidence bands. Ideally we would want to see that, over repeated simulations, the true monotone sequence $x=(x_1,\dots,x_n)$ lies entirely in the band roughly $1-\delta = 90\%$ of the time. While our theory guarantees that coverage will hold with probability {\em at least} 90\%, 
our bounds are of course somewhat conservative. We observe empirically that the coverage is in fact too high---it is essentially 100\%---but nonetheless, the width
of the confidence band is not too conservative. In particular, shrinking the width of the confidence band by a factor of $\approx 0.855$ empirically leads to 
achieving the target 90\% coverage level; in other
words, our confidence bands are around 17\% too wide. (Of course, this ratio is specific to our choice of data distribution, and is likely to vary in different settings.)

\begin{figure}
\centering
\begin{tabular}{cc}
\includegraphics[width=0.5\textwidth]{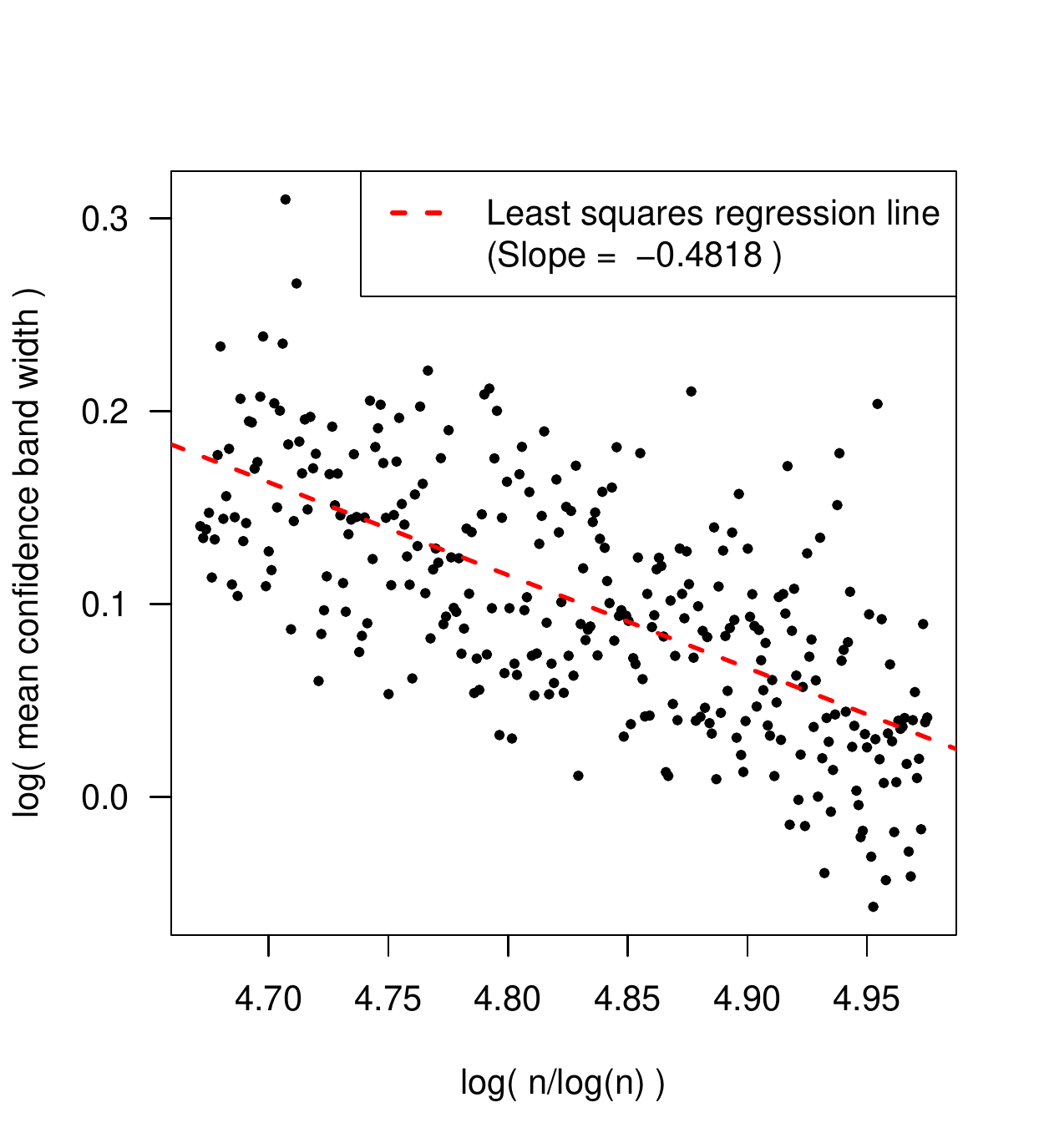}&
\includegraphics[width=0.5\textwidth]{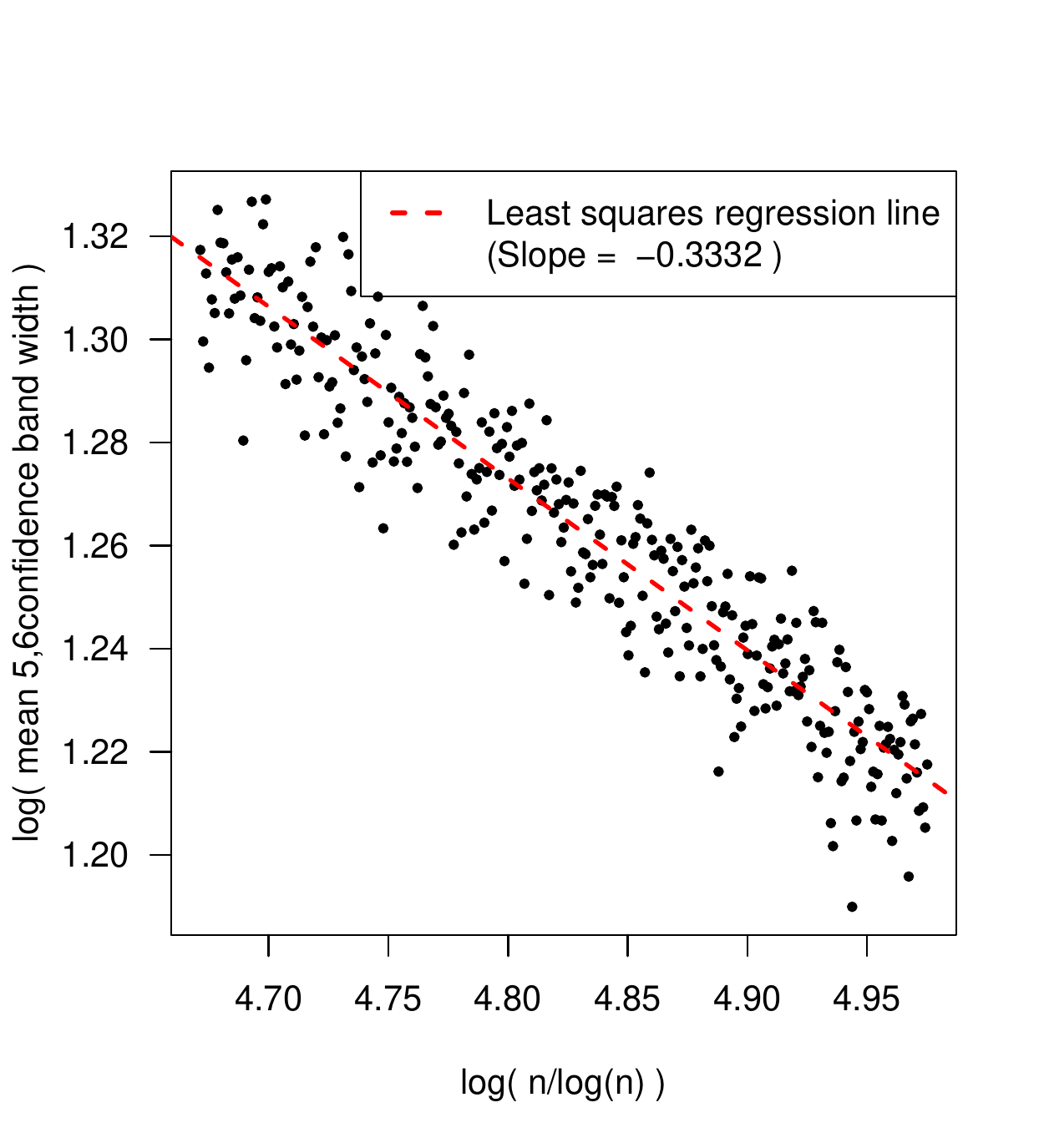}\\
(a) Results for flat regions &(b) Results for increasing region
\end{tabular}
\caption{ For each sample size $700\leq n\leq 1000$, log mean width of the confidence band  over a  region.
 (a) Flat region: $t\in[0.1,0.2]\cup[0.8,0.9]$, where  slope $\approx-1/2$, i.e.~pointwise error scales as $(n/\log(n))^{-1/2}$, as predicted in~\eqnref{predict_onehalf}. (b) Increasing region: $t\in[0.4,0.6]$, where slope  $\approx-1/3$, as predicted in~\eqnref{Lip}.}
\label{fig:sim_scaling}
\end{figure}

\section{Density estimation}
As a second application of the tools developed for the sliding window norm, we consider the problem of estimating a monotone nonincreasing
density $g$ on the interval $[0,1]$, using $n$ samples drawn from this density.

Let $Z_1,\dots,Z_n\iidsim g$ be $n$ samples drawn from the target density $f$, sorted into an ordered list $Z_{(1)}\leq \dots \leq Z_{(n)}$.
The Grenander estimator for the monotone density $g$ is defined as follows. Let $\Gh$ be the empirical cumulative distribution function for this sample,
\[\Gh(t) = \frac{1}{n}\sum_{i=1}^n \One{Z_i \leq t},\]
and let $\Ggren$ be the minimal concave upper bound on $\Gh$. Finally, define the Grenander estimator of the density, denoted by $\ggren$,
as the left-continuous piecewise constant first derivative of $\Ggren$. This process is illustrated in \figref{grenander}. 
It is known (\citet{robertsonorder}) that $\ggren$ can be computed with a simple isotonic projection of a sequence. Namely, 
for $i=1,\dots,n$, let $y_i = n(Z_{(i)} - Z_{(i-1)})$ where we set
$Z_{(0)}\coloneqq 0$, and calculate the isotonic projection $\iso(y)$. Then the Grenander estimator is given by
\begin{equation}\label{eqn:fg}
\ggren = \begin{cases}1/\iso(y)_1,&0\leq t\leq Z_{(1)},\\
1/\iso(y)_2,&Z_{(1)}<t\leq Z_{(2)},\\
\dots\\
1/\iso(y)_n,&Z_{(n-1)}<t\leq Z_{(n)},\\
0,&Z_{(n)}<t\leq 1.\end{cases}\end{equation}

\begin{figure}
\centering
\begin{tabular}{cc}
\includegraphics[width=0.5\textwidth]{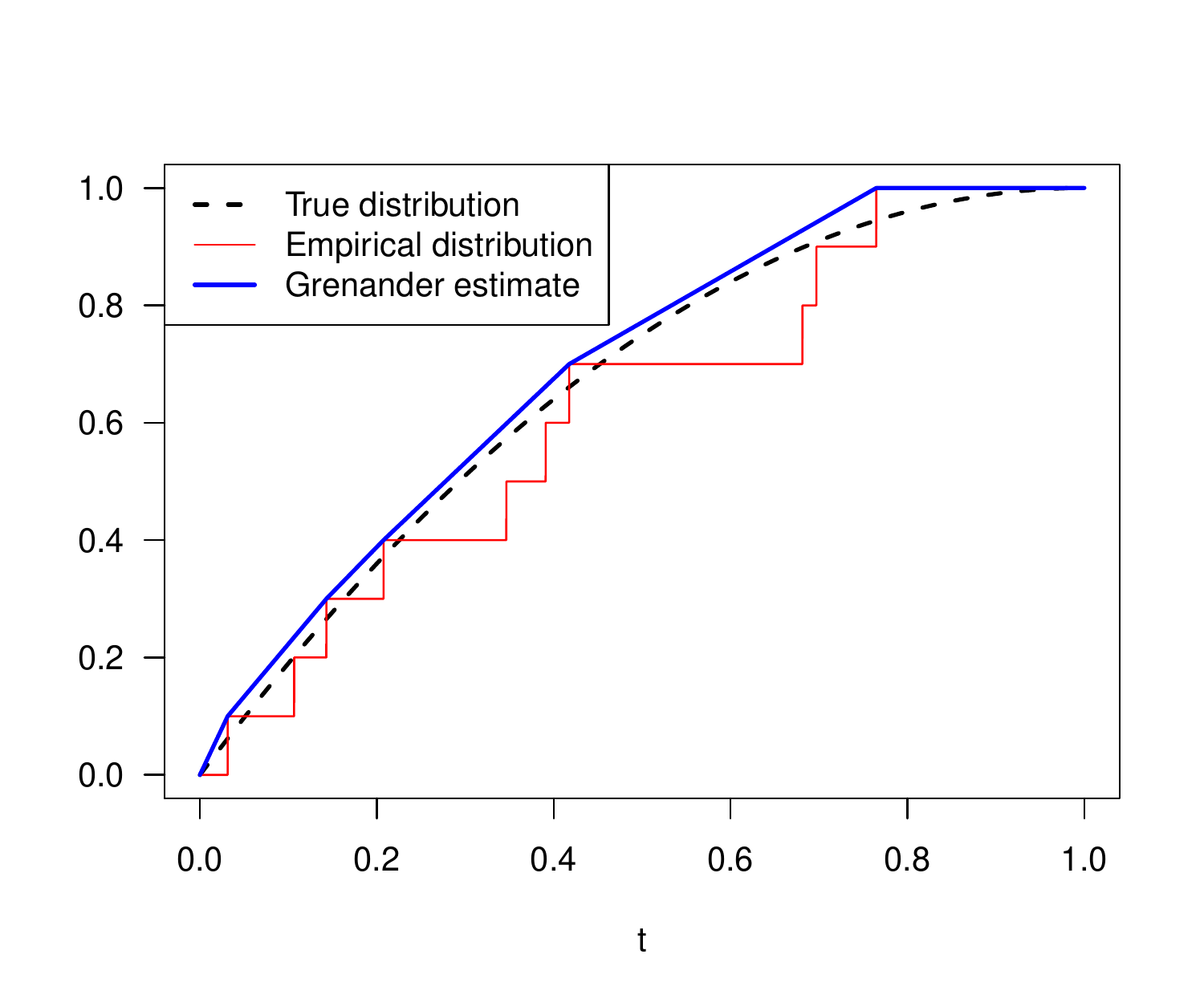}&
\includegraphics[width=0.5\textwidth]{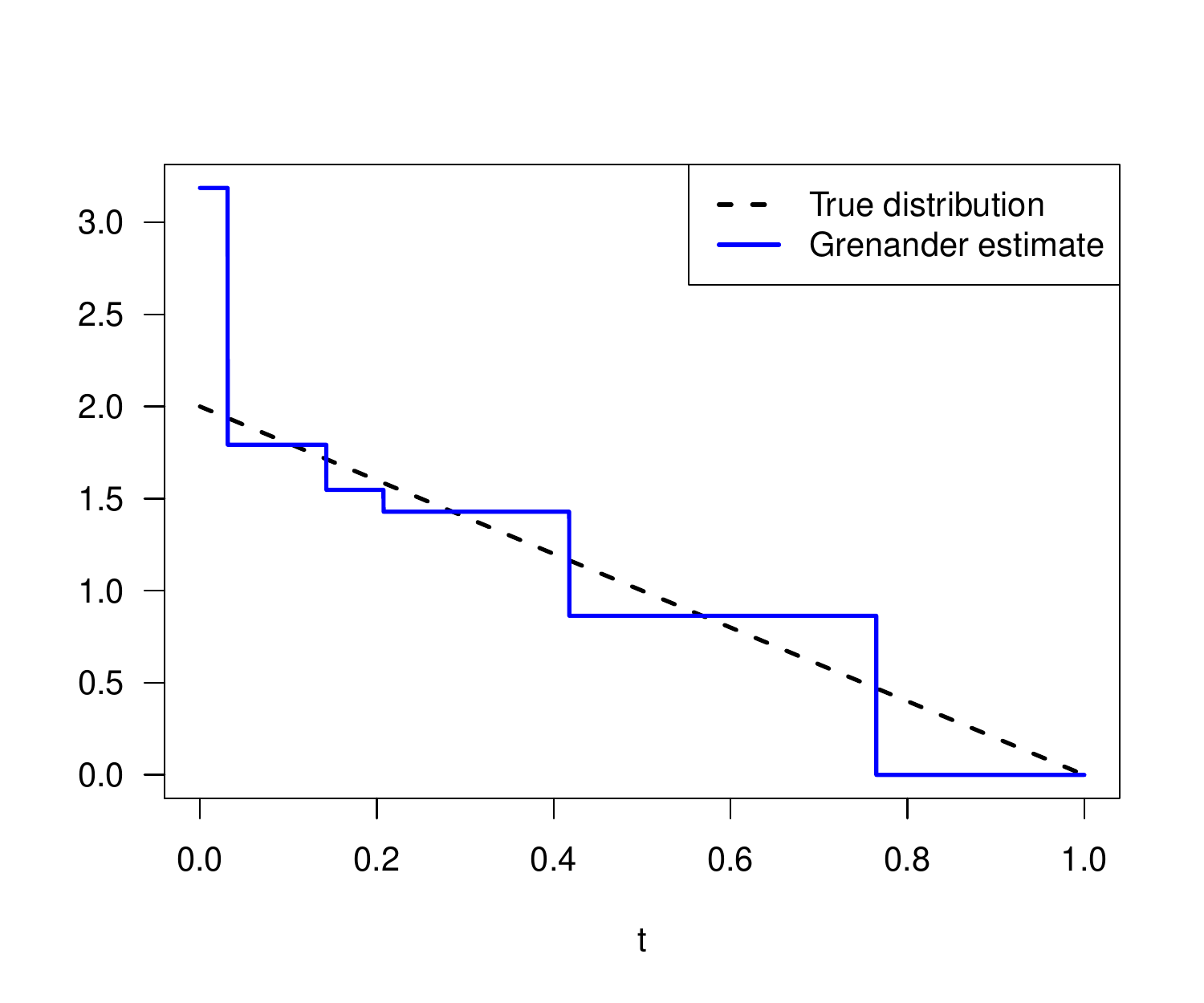}\\
(a) Cumulative distribution function &(b) Density
\end{tabular}
\caption{Illustration of the Grenander estimator for a monotone decreasing density.}
\label{fig:grenander}
\end{figure}

If we assume that $f$ is Lipschitz and lower-bounded, then
our error bounds for isotonic regression transfer easily into this setting, yielding the following theorem (proved in \appref{proofs_density}):
\begin{theorem}\label{thm:density}
Let $g:[0,1]\rightarrow [\cmin,\infty)$ be a nonincreasing $\Lip$-Lipschitz density, let $Z_1,\dots,Z_n$ be \iid~draws from $g$,
and define the Grenander estimator $\ggren$ as in~\eqnref{fg}.
Then for any $\delta>0$, if
\[\Delta \coloneqq  9\left(\frac{1}{c} + \frac{\Lip}{2c^3}\right) \sqrt[3]{\frac{\log((n^2+n)/\delta)}{n}} \leq \frac{1}{\cmin+\Lip},\]
then
\[\PP{\sup_{\Delta\leq t \leq 1-\Delta}\big|g(t) - \ggren(t)\big| \leq \frac{\Delta }{\frac{1}{\cmin+\Lip} \cdot \left(\frac{1}{\cmin+\Lip} - \Delta\right)}} \geq 1-\delta.\]
\end{theorem}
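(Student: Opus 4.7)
The plan is to express the Grenander estimator as an isotonic regression on the spacings $y_i = n(Z_{(i)} - Z_{(i-1)})$ and leverage the convergence results of Section~\ref{sec:bands}. Choose the deterministic target
\[
\tilde{x}_k \coloneqq n\bigl(F^{-1}(k/n) - F^{-1}((k-1)/n)\bigr),
\]
which (i) is monotone nondecreasing since $g$ nonincreasing makes $(F^{-1})' = 1/g$ nondecreasing, (ii) satisfies $\tilde{x}_k \in [1/(c+L), 1/c]$ using $c \leq g \leq c+L$ (the upper bound follows from monotonicity and Lipschitzness), and (iii) has a discrete Lipschitz bound $|\tilde{x}_{k+1} - \tilde{x}_k| \leq L/(c^3 n)$ via the chain rule applied to $1/g\circ F^{-1}$, whose derivative is bounded in magnitude by $L/c^3$. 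On each piece $t\in(Z_{(k-1)}, Z_{(k)}]$ we have $\ggren(t) = 1/\iso(y)_k$, and $g(t) \approx 1/\tilde{x}_k$, so it suffices to bound $|\iso(y)_k - \tilde{x}_k|$ and then invert.

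The main technical step is to prove $\sw{\tilde{x}-y} \lesssim (1/c)\sqrt{\log((n^2+n)/\delta)}$ with $\psi(m) = \sqrt{m}$. The key is the telescoping identity
\[
\overline{y}_{i:j} - \overline{\tilde{x}}_{i:j} = \frac{n\bigl(D_j - D_{i-1}\bigr)}{j-i+1}, \qquad D_k \coloneqq Z_{(k)} - F^{-1}(k/n),
\]
which holds because $\int_{(i-1)/n}^{j/n} (F^{-1})'(u)\,\mathrm{d}u = F^{-1}(j/n) - F^{-1}((i-1)/n)$. Writing $U_k \coloneqq F(Z_{(k)})$ for the $k$-th uniform order statistic, the difference $U_j - U_{i-1} \sim \mathrm{Beta}(j-i+1,\, n-j+i)$ has variance $O((j-i+1)/n^2)$; a Bernstein tail bound union-bounded over the $O(n^2)$ pairs $(i,j)$ gives $|U_j - U_{i-1} - (j-i+1)/n| \leq O\bigl(\sqrt{(j-i+1)\log((n^2+n)/\delta)}/n\bigr)$ uniformly. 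Translating back to $D$ via $g\geq c$ yields $|D_j - D_{i-1}| \leq O\bigl(\sqrt{(j-i+1)\log(\cdot)}/(cn)\bigr)$, hence $\sqrt{j-i+1}\,|\overline{y}_{i:j} - \overline{\tilde{x}}_{i:j}| = n|D_j - D_{i-1}|/\sqrt{j-i+1} \leq O(\sqrt{\log(\cdot)}/c)$, as desired.

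With this bound in hand, applying the Lipschitz specialization~\eqnref{Lip} of Theorem~\ref{thm:subg_theory} to $\tilde{x}$ (Lipschitz constant $L/c^3$) and $y$ (with $S \coloneqq \sw{\tilde{x}-y}$) gives
\[
|\iso(y)_k - \tilde{x}_k| \leq \min_{m}\!\left\{\frac{L\,m}{2c^3 n} + \frac{S}{\sqrt{m}}\right\} \leq \Delta,
\]
with $\Delta$ of the claimed form, the additive split $1/c + L/(2c^3)$ coming from the two contributions after optimizing in $m$. For $t\in(Z_{(k-1)}, Z_{(k)}]$, decompose
\[
|\ggren(t) - g(t)| \leq \bigl|1/\iso(y)_k - 1/\tilde{x}_k\bigr| + \bigl|1/\tilde{x}_k - g(t)\bigr|.
\]
The first term equals $|\iso(y)_k - \tilde{x}_k|/(\iso(y)_k\,\tilde{x}_k) \leq \Delta\bigm/\bigl(\tfrac{1}{c+L}\bigl(\tfrac{1}{c+L} - \Delta\bigr)\bigr)$, using $\tilde{x}_k \geq 1/(c+L)$ together with $\iso(y)_k \geq \tilde{x}_k - \Delta > 0$ under the hypothesis $\Delta \leq 1/(c+L)$. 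The second term is a pure Lipschitz-continuity error, of smaller order, absorbed into the constant. The restriction $t \in [\Delta, 1-\Delta]$ reserves windows of size $m$ on each side of $k$ for the Lipschitz estimate and avoids endpoints where the order statistics localize poorly.

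The main obstacle is the sliding-window concentration: individual spacings $y_i$ have $\Theta(1)$ fluctuations and are dependent, so Lemma~\ref{lem:SW_subg} does not apply and any term-by-term estimate is useless. The telescoping identity is essential, because it converts the $\sqrt{m}$ weighting of the sliding-window norm into a bound on the drift of $D_k$ across a window of length $m$, whose standard deviation $\Theta(\sqrt{m}/n)$ exactly cancels the $\sqrt{m}$ weight. The remainder of the argument is a uniform Bernstein bound on the uniform order-statistic process together with the standard reciprocation identity.
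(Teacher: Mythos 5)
Your overall skeleton matches the paper's: define the deterministic target $x_k = n(G^{-1}(k/n) - G^{-1}((k-1)/n))$, bound the sliding-window distance between $x$ and the observed spacings $y$, apply the Lipschitz specialization of the estimation bands, and then reciprocate $1/\iso(y)_k \to \ggren(t)$ with a ratio argument to transfer back to the density. The telescoping identity and the observation that term-by-term concentration on spacings would be useless are also exactly the right insight. However, there is a genuine gap in the concentration step that breaks the claimed rate.

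The problem is your assertion that, with $\psi(m)=\sqrt{m}$, one has $\sw{x-y}\lesssim (1/\cmin)\sqrt{\log((n^2+n)/\delta)}$, justified by a ``Bernstein tail bound'' giving $|U_j - U_{i-1} - (j-i+1)/n| \lesssim \sqrt{(j-i+1)\log(\cdot)}/n$ uniformly. This bound is false for small windows. The spacing $U_j - U_{i-1}$ is $\text{Beta}(m, n-m+1)$ with $m=j-i+1$; for $m=1$ it is essentially $\text{Exp}(n)$, and a union bound over $O(n^2)$ pairs forces the high-probability deviation to be $\Theta(\log(\cdot)/n)$, not $O(\sqrt{\log(\cdot)}/n)$. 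Any Bernstein-type bound carries the additive linear term, i.e.\ $|U_j-U_{i-1}-m/n|\lesssim (\sqrt{m\log(\cdot)}+\log(\cdot))/n$, which is precisely what the paper's \lemref{unif1} and \lemref{order_unif} establish. Plugging the correct bound into $\psi(m)=\sqrt{m}$ gives $\sw{x-y}\asymp (1/\cmin)\log(\cdot)$ rather than $(1/\cmin)\sqrt{\log(\cdot)}$, because the $\log(\cdot)/\sqrt{m}$ term is maximized at $m=1$. Optimizing $\frac{S}{\sqrt{m}}+\frac{\Lip m}{2\cmin^3 n}$ with $S\sim\log(\cdot)$ then yields a rate $\sim(\log(\cdot)^2/n)^{1/3}$, which is strictly worse than the theorem's $(\log(\cdot)/n)^{1/3}$.

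There is a second, smaller omission: ``translating back to $D$ via $g\geq\cmin$'' uses only the first-order bound $|(G^{-1})'|\leq 1/\cmin$, but the quantity $D_j - D_{i-1}$ is a difference of two increments of $G^{-1}$ evaluated at nearby (but not identical) pairs of points, so the mean-value-theorem comparison picks up a second-order correction proportional to $\frac{\Lip}{\cmin^3}\cdot\frac{|i-j|}{n}\cdot\sup_k|U_{(k)}-k/n|$. This is the second term in \lemref{spacing}, and with $\psi(m)=\sqrt{m}$ it contributes an additional $\frac{\Lip}{\cmin^3}\sqrt{\log(\cdot)}$ to $\sw{x-y}$, which you also drop.

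The paper's fix is to stop insisting on $\psi(m)=\sqrt{m}$. It takes $\psi$ of the form
$\psi(m) = m\Big/\Big[\tfrac{1}{\cmin}(\sqrt{3m\log(\cdot)}+2\log(\cdot)) + \tfrac{4\Lip}{\cmin^3}m\sqrt{\log(\cdot)/n}\Big]$,
which absorbs both the Bernstein linear term and the second-order Taylor error into the window-dependent weight, so that $\sw{x-y}\leq 1$ on the good event while $\psi$ remains nondecreasing with $m/\psi(m)$ concave (so \lemref{SW_contract} still applies). The $\log(\cdot)/m$ contribution to $1/\psi(m)$ then sits at a lower order than $\sqrt{\log(\cdot)/m}$ once $m$ is chosen of order $(n\sqrt{\log(\cdot)})^{2/3}$, recovering the stated $(\log(\cdot)/n)^{1/3}$ rate with the constant $9(1/\cmin + \Lip/(2\cmin^3))$. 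You would also need the separate endpoint argument (establishing $Z_{(m)}\leq\Delta$ and $Z_{(n-m+1)}\geq 1-\Delta$ from the same concentration event) to justify restricting to $t\in[\Delta,1-\Delta]$, which your proposal mentions only in passing.
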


This result is similar to that of \citet{durot2012limit}, which also obtains a $(n/\log(n))^{-1/3}$ convergence rate uniformly over $t$ (although in their work, $t$ is allowed to be slightly closer to the endpoints, by a log factor). Their results are asymptotic, while our work gives a finite-sample
guarantee.
As mentioned in \secref{intro_refs}, \citet{cator2011adaptivity} also derives locally adaptive error bounds whose scaling depends on the local Lipschitz
behavior or local derivatives of $f$. Our locally adaptive results for sequences may also be applied here to  obtain a locally adaptive confidence band on the density $g$, but we do not give details
here.

\section{Proof for contractive isotonic projection}\label{sec:proof_nuna}
In this section, we prove our main result \thmref{iso_contract} showing that, for any semi-norm, the nonincreasing-under-neighbor-averaging (NUNA)
property is necessary and sufficient for isotonic projection to be contractive under this semi-norm.

Before proving the theorem, we introduce a few definitions.
First, for any index $i=1,\dots,n-1$, we define the matrix
\begin{equation}\label{eqn:Ai}
A_i = \left(\begin{array}{cccc}
\ident_{i-1} & 0 & 0 & 0 \\
0 & \nicefrac{1}{2} & \nicefrac{1}{2} & 0\\
0 & \nicefrac{1}{2} & \nicefrac{1}{2} & 0\\
0 & 0 & 0 & \ident_{n-i-1}\end{array}\right)\in\R^{n\times n} ,\end{equation}
which averages entries $i$ and $i+1$. That is,
\[A_i x = \left(x_1,\dots,x_{i-1},\frac{x_i + x_{i+1}}{2},\frac{x_i + x_{i+1}}{2},x_{i+2},\dots,x_n\right).\]
We also define an algorithm for isotonic projection that differs
from PAVA, and in fact does not converge in finite time, but is useful for the purpose
of theoretical analysis. For any $x\in\R^n$ and any index $i=1,\dots,n-1$, define
\[\iso_i(x) = \begin{cases}x,&\text{ if }x_i\leq x_{i+1},\\
A_i x,
&\text{ if }x_i>x_{i+1}.\end{cases}\]
 In other words, if neighboring entries $i$ and $i+1$ violate the monotonicity constraint,
then we average them.
The following lemma shows that, by iterating this step infinitely many times (while cycling
through the indices $i=1,\dots,n-1$), we converge
to the isotonic projection of $x$.
\begin{lemma}\label{lem:slow_pava}
Fix any $x=x^{(0)}\in\R^n$, and define
\begin{equation}\label{eqn:slow_pava}
x^{(t)} = \iso_{i_t}(x^{(t-1)}) \text{ where }i_t = 1 + \textnormal{mod}(t-1,n-1)\text{ for $t=1,2,3,\dots$}.\end{equation}
Then
\[\lim_{t\rightarrow\infty} x^{(t)} = \iso(x).\]
\end{lemma}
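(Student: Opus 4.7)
The plan is to view the iteration as cyclic projection onto the closed convex half-spaces $C_i := \{y\in\R^n : y_i \leq y_{i+1}\}$, whose intersection is $\ck_{\iso}$. Each $\iso_i$ is exactly the orthogonal $\ell_2$-projection onto $C_i$: if $y_i\leq y_{i+1}$ then $y$ is already in $C_i$, and otherwise $A_i y$ lies on the bounding hyperplane $V_i := \{z : z_i = z_{i+1}\}$ and $y-A_i y$ is orthogonal to $V_i$. Since $\iso(x)\in\ck_{\iso}\subseteq C_{i_t}$ for every $t$, the firm nonexpansiveness of projection onto a closed convex set gives
\[
\norm{x^{(t)}-\iso(x)}_2^2 \leq \norm{x^{(t-1)}-\iso(x)}_2^2 - \norm{x^{(t)}-x^{(t-1)}}_2^2.
\]
Telescoping yields $\sum_{t\geq 1}\norm{x^{(t)}-x^{(t-1)}}_2^2 \leq \norm{x-\iso(x)}_2^2 < \infty$, so $\norm{x^{(t)}-x^{(t-1)}}_2\to 0$, and $\{x^{(t)}\}$ is bounded (the distance to $\iso(x)$ is nonincreasing).

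Next I would show that every limit point of $\{x^{(t)}\}$ lies in $\ck_{\iso}$. Given a subsequence $x^{(t_k)}\to x^*$, pass to a further subsequence on which $t_k\bmod(n-1)$ is constant. Because the increments vanish, $x^{(t_k+j)}\to x^*$ for every fixed $j\in\{0,\dots,n-1\}$, and by the choice of $t_k$ the indices $i_{t_k+1},\dots,i_{t_k+n-1}$ enumerate $\{1,\dots,n-1\}$ in a common order. Taking $k\to\infty$ in $x^{(t_k+j)}=\iso_{i_{t_k+j}}(x^{(t_k+j-1)})$ and using the continuity of each $\iso_i$ (it is 1-Lipschitz as a projection) gives $x^*=\iso_i(x^*)$ for every $i\in\{1,\dots,n-1\}$, hence $x^*\in\ck_{\iso}$.

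The main obstacle, and what distinguishes this setting from generic cyclic projection onto half-spaces (which in general does not recover the true projection onto the intersection), is identifying $x^*$ with $\iso(x)$. I would establish the key invariance
\[
\iso\bigl(\iso_i(y)\bigr) = \iso(y) \qquad\text{for every }y\in\R^n\text{ and every }i.
\]
The case $y_i\leq y_{i+1}$ is trivial. For $y_i > y_{i+1}$, I would first show $\iso(y)_i = \iso(y)_{i+1}$ by a one-sided variation: if instead $\iso(y)_i < \iso(y)_{i+1}$, then $\iso(y)+\epsilon(\ee{i}-\ee{i+1})$ remains feasible for all sufficiently small $\epsilon>0$, and first-order optimality forces $\iso(y)_i - \iso(y)_{i+1} \geq y_i - y_{i+1} > 0$, contradicting $\iso(y)\in\ck_{\iso}$. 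Hence $\iso(y)\in V_i$, and for any $z\in V_i$ the Pythagorean identity $\norm{z-y}_2^2 = \norm{z-A_i y}_2^2 + \norm{A_i y - y}_2^2$ (which holds because $z - A_i y\in V_i$ is orthogonal to $A_i y - y$) gives $\iso(y) = \argmin_{z\in\ck_{\iso}\cap V_i}\norm{z-\iso_i(y)}_2^2$. The same argument applied to $\iso_i(y)$, whose $i$-th and $(i+1)$-st entries coincide, shows $\iso(\iso_i(y))\in V_i$, so the constraint $z\in V_i$ does not affect the unconstrained minimizer of $\norm{z-\iso_i(y)}_2^2$ over $\ck_{\iso}$, yielding $\iso(\iso_i(y)) = \iso(y)$.

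With this invariance in hand, $\iso(x^{(t)}) = \iso(x)$ for every $t$; continuity of $\iso$ passes this to $\iso(x^*) = \iso(x)$, and since $x^*\in\ck_{\iso}$ we have $\iso(x^*) = x^*$, hence $x^* = \iso(x)$. Every limit point of the bounded sequence therefore equals $\iso(x)$, so $x^{(t)}\to\iso(x)$.
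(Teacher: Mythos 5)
Your proof is correct, and it takes a genuinely different route from the paper's in two places. For the convergence of the cyclic iteration, the paper simply cites classical results (Bregman, Han) asserting that cyclic projections onto convex sets converge to \emph{some} point of the intersection $\ck_\iso$; you derive this from scratch via firm nonexpansiveness, the telescoping bound $\sum_t\norm{x^{(t)}-x^{(t-1)}}_2^2\leq\norm{x-\iso(x)}_2^2$, boundedness, and a subsequence argument along which the cycle index is fixed. That self-containment is a modest improvement. For the key invariance $\iso(\iso_i(y))=\iso(y)$ --- which, as you correctly identify, is what distinguishes this case from generic cyclic projection and is used identically in both proofs --- the paper argues by directly expanding $\inner{A_i y-\iso(y)}{\iso(A_i y)-\iso(y)}$, showing it is $\leq 0$, and combining with the optimality of $\iso(A_i y)$ to force equality; you instead show both $\iso(y)$ and $\iso(\iso_i(y))$ lie on the hyperplane $V_i=\{z:z_i=z_{i+1}\}$ and then invoke the Pythagorean identity to identify both as $\argmin_{z\in\ck_\iso\cap V_i}\norm{z-\iso_i(y)}_2^2$. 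Your argument is more geometric and perhaps more illuminating about \emph{why} the invariance holds. One small point of care: when you say ``the same argument'' shows $\iso(\iso_i(y))\in V_i$, note that for $w=\iso_i(y)$ you have $w_i=w_{i+1}$ rather than $w_i>w_{i+1}$, so the first-order optimality inequality becomes $\iso(w)_i-\iso(w)_{i+1}\geq 0$ rather than a strict inequality; the conclusion is then obtained by combining this with $\iso(w)_i\leq\iso(w)_{i+1}$ from $\iso(w)\in\ck_\iso$, rather than by a direct contradiction as in the first application. The argument goes through, but the logical structure differs slightly between the two uses and is worth spelling out.
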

With this slow projection algorithm in place, we turn to the proof of our theorem.

\begin{proof}[Proof of \thmref{iso_contract}]
First suppose that $\norm{\cdot}$ satisfies the NUNA property.
We will prove that, for any $x,y\in\R^n$ and any index $i=1,\dots,n-1$,
\begin{equation}\label{eqn:iso_contract_step}\norm{\iso_i(x)-\iso_i(y)}\leq\norm{x-y}.\end{equation}
If this is true, then by \lemref{slow_pava}, this is sufficient
to see that isotonic projection is contractive with respect to $\norm{\cdot}$,
since the map $x\mapsto\iso(x)$ is just a composition of (infinitely many)
steps of the form $x\mapsto\iso_i(x)$. More concretely, defining $x^{(t)}$ and $y^{(t)}$ as in
 \lemref{slow_pava},~\eqnref{iso_contract_step} proves that $\norm{x^{(t)}-y^{(t)}}\leq \norm{x^{(t-1)} - y^{(t-1)}}$
 for each $t\geq1$. Applying this inductively proves that $\norm{x^{(t)}-y^{(t)}}\leq \norm{x-y}$ for all $t\geq 1$,
and then taking the limit as $t\rightarrow\infty$, we obtain $\norm{\iso(x)-\iso(y)}\leq \norm{x-y}$.

Now we turn to proving~\eqnref{iso_contract_step}. We will split into four cases.
\begin{itemize}
\item Case 1: $x_i \leq x_{i+1}$ and $y_i \leq y_{i+1}$. In this
case, $\iso_i(x)=x$ and $\iso_i(y) = y$, and so trivially,
\[\norm{\iso_i(x) - \iso_i(y)}=\norm{x-y}.\]
\item Case 2: $x_i > x_{i+1}$ and $y_i > y_{i+1}$. In this case,
we have
\[\big[\iso_i(x)\big]_i = \big[\iso_i(x)\big]_{i+1} = \frac{x_i + x_{i+1}}{2}\]
and
\[\big[\iso_i(y)\big]_i = \big[\iso_i(y)\big]_{i+1} = \frac{y_i + y_{i+1}}{2},\]
while all entries $j\not\in\{i,i+1\}$ are unchanged.
Therefore, we can write
\[\iso_i(x)- \iso_i(y) = A_i \cdot (x-y).\]
Since $\norm{\cdot}$ satisfies the NUNA property, therefore,
\[\norm{\iso_i(x) - \iso_i(y)} = \norm{A_i \cdot (x-y)}\leq \norm{x-y}.\]
\item Case 3: $x_i\leq x_{i+1}$ and $y_i > y_{i+1}$. Let
\[t = \frac{y_i - y_{i+1}}{x_{i+1} - x_i + y_i - y_{i+1}}.\]
Note that $t\in [0,1]$ by the definition of this case.
A trivial calculation shows that
\[\big[\iso_i(x) - \iso_i(y)\big]_i = x_i - \frac{y_i + y_{i+1}}{2} = (1-t/2)\cdot (x_i - y_i) + t/2\cdot (x_{i+1} - y_{i+1})\]
and
\[\big[\iso_i(x) - \iso_i(y)\big]_{i+1} = x_{i+1} - \frac{y_i + y_{i+1}}{2} = t/2\cdot (x_i - y_i) + (1-t/2)\cdot (x_{i+1} - y_{i+1})\]
This means that we have
\[\iso_i(x) - \iso_i(y) = (1-t)\cdot (x-y) + t\cdot A_i\cdot (x-y),\]
and so
\[\norm{\iso_i(x) - \iso_i(y)}\leq (1-t)\cdot\norm{x-y} + t\cdot \norm{A_i\cdot (x-y)} \leq \norm{x-y},\]
since $\norm{\cdot}$ satisfies NUNA.
\item Case 4: $x_i> x_{i+1}$ and $y_i\leq y_{i+1}$. By symmetry, this is equivalent
to Case 3.
\end{itemize}
This proves~\eqnref{iso_contract_step}, and therefore, is sufficient to show that
isotonic projection is a contraction with respect to $\norm{\cdot}$.

Now we prove the converse. Suppose that $\norm{\cdot}$ does not satisfy NUNA.
Then we can find some $x$ and some $i$ such that
\[\norm{A_i x}
> \norm{x}.\]
 Without loss of generality we can assume $x_i\leq x_{i+1}$
(otherwise simply replace $x$ with $-x$---since $\norm{\cdot}$ is a norm, we will have $\norm{- A_i x} = \norm{A_i x} > \norm{x} = \norm{-x}$). 

Let $B=\max_{1\leq j\leq n-1}|x_j - x_{j+1}|$, and let $\Delta=x_{i+1}-x_i \in [0, B]$.

Now define
\[y = (\Delta-(i-1)B,\Delta-(i-2)B,\dots,\Delta-B,\underbrace{\Delta}_{\text{entry $i$}},\underbrace{0}_{\text{entry $i+1$}},B,2B,\dots,(n-i-1)B),\]
and $z=y+x$.
We can check that $\iso(z)=z$, since
\[z_{j+1} - z_j = \begin{cases} x_{j+1} - x_j + B \geq 0,&\text{ if $j\neq i$},\\
x_{i+1} - x_i - \Delta = 0,&\text{ if $j=i$.}
\end{cases}\]
On the other hand, using the fact that $0\leq\Delta\leq B$, we have
\[\iso(y) = \left(\Delta-(i-1)B,\Delta-(i-2)B,\dots,\Delta-B,\frac{\Delta}{2},\frac{\Delta}{2},B,2B,\dots,(n-i-1)B\right),\]
and so
\begin{multline*}
\iso(z)-\iso(y) = (z-y) + (y-\iso(y)) = x + \left(0,\dots,0,\frac{\Delta}{2},-\frac{\Delta}{2},0,\dots,0\right)\\
 =\left(x_1,x_2,\dots,x_{i-1},\frac{x_i + x_{i+1}}{2}  ,\frac{x_i + x_{i+1}}{2},x_{i+2},x_{i+3},\dots,x_n\right) = A_i x.\end{multline*}
 Therefore, $\norm{\iso(z)-\iso(y)}>\norm{z-y}$, proving that
isotonic projection is not contractive with respect to $\norm{\cdot}$.

\end{proof}

\section{Discussion}
We study contraction properties of isotonic regression with an application on a novel sliding window norm. We then use these tools to construct data-adaptive estimation bands and obtain non-asymptotic uniform estimation bound of isotonic estimator. Our results are adaptive to the local behavior of the unknown signal, and can be used in a related density estimation problem.  The analysis tools we developed are potentially useful for other shape-restricted problems. 

We expect to apply our results on the high dimensional inference or calibration problems, where isotonic regression could serve as an important tool. 
Furthermore, ordering constraints, and more generally shape constraints, can take many different forms in various applications---for instance, two commonly studied constraints
 are isotonicity over a two-dimensional grid (in contrast to the one-dimensional ordering studied here), and convexity or concavity. 
It may be possible to extend the contraction results to a more general shape-constrained regression setting.
We leave these problems for future work.

\subsection*{Acknowledgements}
R.F.B.~was partially supported by an Alfred P.~Sloan Fellowship. The authors are grateful to Sabyasachi Chatterjee for 
helpful feedback on earlier drafts of this paper.
\bibliographystyle{plainnat}
\bibliography{draft}

\appendix
\section{Additional proofs}\label{app:proofs}

\subsection{Proof of $\ell_2$ error rate (\thmref{L2})}\label{app:proof_L2}

First we define the cube $A = [\iso(x)_1, \iso(x)_n]^n \subset \R^n$, and let $z = \pr{A}(\iso(y))$
be the projection of $\iso(y)$ to this cube, which is computed by truncating each entry $\iso(y)_i$ to the range $[\iso(x)_1,\iso(x)_n]$.
Note that $\iso(x)+z$ is now a monotone vector with range given by $(\iso(x)+z)_n - (\iso(x)+z)_1\leq 2V$.
Now, fix any integer $M\geq 1$ and find integers
\[0 = k_0< \dots <k_M=n\]
such that
\[\Big| (\iso(x) + z)_{k_m} - (\iso(x)+z)_{k_{m-1}+1}\Big| \leq \frac{2V}{M}\text{ for all $m=1,\dots,M$},\]
which we can find since the total variation of the vector $\iso(x)+z$ is bounded by $2V$ (Lemma 11.1 in \citet{chatterjee2015risk}).

For each $m=1,\dots,M$, let $I_m = \{k_{m-1}+1,\dots,k_m\}$ be the set of indices in the $m$th bin.
We can calculate
\begin{multline*}
\max_{i\in I_m}(z-\iso(x))_i -\min_{i\in I_m}(z-\iso(x))_i\\
\leq \max_{i\in I_m}z_i + \max_{i\in I_m}\iso(x)_i - \min_{i\in I_m} z_i - \min_{i\in I_m}\iso(x)_i\\
= (\iso(x)+z)_{k_m} - (\iso(x)+z)_{k_{m-1}+1} \leq \frac{2V}{M}.\end{multline*}
This implies that
\[ \norm{(z-\iso(x))_{I_m} -\overline{(z-\iso(x))}_{I_m}\cdot\ones_{I_m}}_2 \leq  \frac{2V}{M}\cdot\sqrt{k_m-k_{m-1}}.\]
We also have
\[\big|\overline{z}_{I_m} - \overline{\iso(x)}_{I_m}\big| \leq \frac{\sw{z-\iso(x)}}{\sqrt{k_m-k_{m-1}}}\]
by our choice of the sliding window norm. 
Next, by the triangle inequality we have
\begin{multline*}
\norm{z_{I_m} - \iso(x)_{I_m}}^2_2 \\
\leq \left(\norm{\overline{z}_{I_m}\cdot\ones_{I_m} - \overline{\iso(x)}_{I_m}\cdot\ones_{I_m}}_2  + \norm{(z-\iso(x))_{I_m} -\overline{(z-\iso(x))}_{I_m}\cdot\ones_{I_m}}_2 \right)^2\\
=\left(\big|\overline{z}_{I_m} - \overline{\iso(x)}_{I_m}\big|\cdot\sqrt{k_m-k_{m-1}} +  \norm{(z-\iso(x))_{I_m} -\overline{(z-\iso(x))}_{I_m}\cdot\ones_{I_m}}_2\right)^2\\
\leq \left( \sw{z-\iso(x)} + \frac{2V}{M}\cdot\sqrt{k_m-k_{m-1}}\right)^2 \leq 2\big(\sw{z-\iso(x)}\big)^2 + \frac{8V^2}{M^2}(k_m - k_{m-1}).
\end{multline*}
Therefore,
\[\norm{z-\iso(x)}^2_2 = \sum_{m=1}^M \norm{z_{I_m}-\iso(x)_{I_m}}^2_2 \leq 2M\big(\sw{z-\iso(x)}\big)^2 + \frac{8V^2}{M^2}\sum_{m=1}^M (k_m- k_{m-1}).\]
Since $\sum_{m=1}^M (k_m-k_{m-1}) = n$ trivially, we can simplify this to
\[\norm{z-\iso(x)}^2_2 \leq2M\big(\sw{z-\iso(x)}\big)^2 + \frac{8V^2n}{M^2}.\]
Now, since $z$ is the projection of $\iso(y)$ to the range $[\iso(x)_1,\iso(x)_n]$, it follows trivially that $\sw{z-\iso(x)}\leq \sw{\iso(x)-\iso(y)}$ and therefore is bounded by $\sw{x-y}$ by our contraction result. Therefore,
\[\norm{z-\iso(x)}^2_2 \leq2M\big(\sw{x-y}\big)^2 + \frac{8V^2n}{M^2}.\]
Next, we have
\begin{multline*}\norm{\iso(y) - \iso(x)}^2_2 \leq 2\norm{z-\iso(x)}^2_2 + 2\norm{\iso(y) - z}^2_2\\ = 2\norm{z-\iso(x)}^2_2 + 2\sum_{i=1}^n (z_i-\iso(y)_i)^2_+ + 2\sum_{i=1}^n (\iso(y)_i - z_i)^2_+\\
\leq 4M\big(\sw{x-y}\big)^2 + \frac{16V^2n}{M^2} + 2\sum_{i=1}^n (\iso(x)_1 -\iso(y)_i)^2_+ + 2\sum_{i=1}^n (\iso(y)_i - \iso(x)_n )^2_+.
\end{multline*}
It remains to bound these last two terms. First we bound $\sum_{i=1}^n (\iso(x)_1 -\iso(y)_i)^2_+$. If $\iso(y)_1 \geq \iso(x)_1$ then this term is zero, so now we focus on the case that $\iso(y)_1 < \iso(x)_1$. For any $1\leq j\leq i$, we have
\[\iso(x)_j - \iso(y)_j  \geq \iso(x)_1 - \iso(y)_j \]
and so
\[ \iso(x)_1 - \iso(y)_i \leq \overline{\iso(x)}_{1:i} - \overline{\iso(y)}_{1:i} \leq \frac{\sw{x-y}}{\sqrt{i}}.\]
Therefore,
\[\sum_{i=1}^n ( \iso(x)_1 - \iso(y)_i)^2_+ \leq \sum_{i=1}^n \left(\frac{\sw{x-y}}{\sqrt{i}}\right)^2 = \big(\sw{x-y}\big)^2 \cdot 2\log(2n),\]
by bounding the harmonic series $1 + \frac{1}{2}+\dots + \frac{1}{n}$. We also have $\sum_{i=1}^n (\iso(y)_i - \iso(x)_n )^2_+\leq 
 \big(\sw{x-y}\big)^2 \cdot 2\log(2n)$ by an identical argument. Combining everything, then,
\[\norm{\iso(y) - \iso(x)}^2_2\leq 4M\big(\sw{x-y}\big)^2 + \frac{16V^2n}{M^2} + 8\big(\sw{x-y}\big)^2\log(2n).\]
Setting
\[M =\lceil M_0 \rceil\text{ where }M_0 = \frac{2V^{2/3}n^{1/3}}{\big(\sw{x-y}\big)^{2/3}},\]
we obtain
\begin{multline*}\norm{\iso(y) - \iso(x)}^2_2\leq 4\left(\frac{2V^{2/3}n^{1/3}}{\big(\sw{x-y}\big)^{2/3}}+1\right)\big(\sw{x-y}\big)^2 \\+ \frac{16V^2n}{\left(\frac{2V^{2/3}n^{1/3}}{\big(\sw{x-y}\big)^{2/3}}\right)^2} + 8\big(\sw{x-y}\big)^2\log(2n).\end{multline*}
After simplifying (and assuming $n\geq 2$ to avoid triviality), this bound becomes
\[\frac{1}{n}\norm{\iso(y) - \iso(x)}^2_2\leq \frac{12V^{2/3}\big(\sw{x-y}\big)^{4/3}}{n^{2/3}}   +\frac{12\log(2n)}{n}\cdot  \big(\sw{x-y}\big)^2.\]
Applying \lemref{SW_subg}, we have
$\EE{  \big(\sw{x-y}\big)^2} \leq 8\sigma^2\log(2n)$
which implies that $\EE{  \big(\sw{x-y}\big)^{4/3}} \leq (8\sigma^2\log(2n) )^{2/3} = 4\sigma^{4/3}\log^{2/3}(2n)$.
Plugging this in, we then have
\[\EE{\frac{1}{n}\norm{\iso(y) - \iso(x)}^2_2}\leq 48\left(\frac{V\sigma^2\log(2n)}{n}\right)^{2/3} +\frac{96\sigma^2\log^2(2n)}{n}.\]

\subsection{Proof of density estimation result (\thmref{density})}\label{app:proofs_density}

Let $G(t) = \int_{s=0}^t g(s)\;\mathsf{d}s$ be the cumulative distribution function for the density $g$.
Since $g(t)\geq \cmin$ everywhere, this means that $G(t)$ is strictly increasing, and is therefore
invertible. Using this lower bound on $g$, and the assumption that $g$ is $\Lip$-Lipschitz, we can furthermore calculate
\begin{equation}\label{eqn:Gderivs}
0\leq (G^{-1})'(t) = \frac{1}{g(G^{-1}(t))} \leq \frac{1}{c} \ \text{ and } \ \big|(G^{-1})''(t)\big| = \left|\frac{-g'(G^{-1}(t))}{(g(G^{-1}(t)))^3}\right| \leq \frac{L}{c^3}.\end{equation}

Let
\[x_i =n\left( G^{-1}(i/n) - G^{-1}\big((i-1)/n\big)\right)\text{ and }y_i = n\big(Z_{(i)} - Z_{(i-1)}\big)\]
for $i=1,\dots,n$, where $Z_{(0)}\coloneqq 0$.
Note that $x$ gives the difference in quantiles of the distribution, while $y$ estimates
these gaps empirically.

The following lemma (proved in \appref{proofs_lemmas}) gives a concentration result on the $Z_{(i)}$'s:
\begin{lemma}\label{lem:spacing}
Let $Z_{(1)}\leq \dots\leq Z_{(n)}$ be the order statistics of $Z_1,\dots,Z_n\iidsim g$, where the density $g:[0,1]\rightarrow[\cmin,\infty)$ is $\Lip$-Lipschitz.
Then for any $\delta>0$, with probability at least $1-\delta$,
\begin{equation}\label{eqn:spacing1}\left|Z_{(i)} - G^{-1}(i/n)\right|  \leq     \frac{4}{\cmin}\cdot \sqrt{\frac{\log((n^2+n)/\delta)}{ n}}\end{equation}
$\text{ for all $1\leq i\leq n$}$, and
\begin{multline}\label{eqn:spacing2}\left|\big(Z_{(i)} - Z_{(j)}\big) -\left( G^{-1}(i/n) -  G^{-1}(j/n)\right)\right| \\
\leq \frac{\sqrt{3|i-j|\log((n^2+n)/\delta)}  +2\log((n^2+n)/\delta)}{\cmin n}+ \frac{4\Lip|i-j| \sqrt{\log((n^2+n)/\delta)}}{\cmin^3 n^{3/2}} \end{multline}
$\text{ for all $1\leq i<j\leq n$.}$
\end{lemma}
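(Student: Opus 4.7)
The plan is to reduce to concentration statements for uniform order statistics via the probability integral transform, then transfer back using the derivative bounds on $G^{-1}$ that were established in~\eqnref{Gderivs}. Since $g\ge\cmin>0$, the CDF $G:[0,1]\to[0,1]$ is a continuous bijection, so $U_k:=G(Z_k)$ are iid $\mathrm{Uniform}[0,1]$ with $Z_{(i)}=G^{-1}(U_{(i)})$. Writing $s:=\log((n^2+n)/\delta)$, I first aim to show that on a single event of probability at least $1-\delta$,
\[
|U_{(i)}-i/n|\le 4\sqrt{s/n}\ \text{ for all }1\le i\le n,\qquad \bigl|(U_{(j)}-U_{(i)})-(j-i)/n\bigr|\le \frac{\sqrt{3(j-i)s}+2s}{n}\ \text{ for all }i<j.
\]

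To prove both, I will use Beta--Binomial duality: $\{U_{(i)}\le t\}=\{\mathrm{Bin}(n,t)\ge i\}$, and since $U_{(j)}-U_{(i)}\sim\mathrm{Beta}(j-i,\,n-j+i+1)$, also $\{U_{(j)}-U_{(i)}\le t\}=\{\mathrm{Bin}(n,t)\ge j-i\}$. Applying Bernstein's inequality to the relevant binomial tails (using $np(1-p)\le np$) with $t=i/n\pm\eps$ for the first bound and $t=(j-i)/n\pm\eps$ for the second, then solving the resulting quadratic in $n\eps$, gives each individual event failure probability at most $2e^{-s}$. A union bound over the $n+\binom{n}{2}=n(n+1)/2$ events then yields total failure probability at most $n(n+1)e^{-s}=\delta$, which is precisely why $s$ carries $\log((n^2+n)/\delta)$.

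Next I transfer back to the $Z$-scale. Let $\phi(u):=(G^{-1})'(u)=1/g(G^{-1}(u))$, which by~\eqnref{Gderivs} satisfies $0\le\phi\le 1/\cmin$ and $|\phi'|\le \Lip/\cmin^3$. For~\eqnref{spacing1}, the bound is immediate: $|Z_{(i)}-G^{-1}(i/n)|=\bigl|\int_{i/n}^{U_{(i)}}\phi\bigr|\le(1/\cmin)|U_{(i)}-i/n|$. For~\eqnref{spacing2}, write $a=i/n,\ b=j/n,\ \delta_a=U_{(i)}-a,\ \delta_b=U_{(j)}-b$, and expand the deviation as $\int_b^{b+\delta_b}\phi-\int_a^{a+\delta_a}\phi$. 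Assuming WLOG $0\le\delta_a\le\delta_b$ (the remaining sign patterns are entirely analogous), I will split this as
\[
\int_0^{\delta_a}\bigl[\phi(b+r)-\phi(a+r)\bigr]\,dr + \int_{\delta_a}^{\delta_b}\phi(b+r)\,dr.
\]
The first integral is bounded by $(\Lip/\cmin^3)(b-a)|\delta_a|=\bigl(\Lip(j-i)/(\cmin^3 n)\bigr)|U_{(i)}-i/n|$ via the Lipschitz bound on $\phi$, and the second by $(1/\cmin)|\delta_b-\delta_a|=(1/\cmin)\bigl|(U_{(j)}-U_{(i)})-(j-i)/n\bigr|$. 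Substituting the two uniform bounds above produces exactly the two terms on the right side of~\eqnref{spacing2}.

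The main obstacle is pinning the constants in the Bernstein step so that the second bound comes out in the form $(\sqrt{3(j-i)s}+2s)/(\cmin n)$: one has to choose the right asymmetric Bernstein exponent (the Bennett-style $t^2/(2np(1-p)+2t/3)$ for the upper binomial tail versus the plain Chernoff $t^2/(2np)$ for the lower), and carefully absorb integer shifts (e.g.\ the $\{\mathrm{Bin}(n,t)\le j-i-1\}$ vs $\{\mathrm{Bin}(n,t)<j-i\}$ distinction) into the additive $2s/n$ slack. The integral decomposition in the transfer step is routine once the sign case-analysis on $(\delta_a,\delta_b)$ is laid out, and the remaining sign cases reduce to the one treated above by obvious sign flips.
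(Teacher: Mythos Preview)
Your proposal is correct and follows the same overall architecture as the paper: probability integral transform to uniforms, Beta--Binomial duality plus Chernoff-type tail bounds with a union bound over all $\binom{n+1}{2}$ pairs, then transfer back via the derivative bounds~\eqnref{Gderivs}. The concentration step is essentially identical (the paper uses the multiplicative Chernoff bound rather than Bernstein, which is where the constants $\sqrt{3}$ and $2$ come from, but the mechanism is the same; also the paper derives the pointwise bound $4\sqrt{s/n}$ as the special case $i=0$ of the spacing bound, noting separately that it holds trivially when $s>n$).

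The one substantive difference is the transfer step for~\eqnref{spacing2}. You write the deviation as $\int_b^{b+\delta_b}\phi-\int_a^{a+\delta_a}\phi$ and split it by hand, which forces a sign case-analysis on $(\delta_a,\delta_b)$. The paper instead applies the mean value theorem to the interpolated function $s\mapsto G^{-1}(sU_{(i)}+(1-s)U_{(j)})-G^{-1}((si+(1-s)j)/n)$, obtaining a single expression $(U_{(i)}-U_{(j)})\,\phi(\xi_1)-\tfrac{i-j}{n}\,\phi(\xi_2)$ that works for all signs at once; adding and subtracting then yields the same two terms. Your route is more elementary (fundamental theorem of calculus plus Lipschitz), while the paper's avoids the case split entirely; either way the final bound is identical.
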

From now on, we assume that these bounds~\eqnref{spacing1} and~\eqnref{spacing2} both hold.
Plugging our definitions of $x$ and $y$ into these two bounds, this proves that
\[\big| \overline{x}_{i:j}-\overline{y}_{i:j}\big|   \leq \frac{\sqrt{3(j-i+1)\log((n^2+n)/\delta)}  +2\log((n^2+n)/\delta)}{\cmin \cdot  (j-i+1)}+ \frac{4\Lip}{\cmin^3}\cdot\sqrt{\frac{\log((n^2+n)/\delta)}{n}}\]
for all $1\leq i\leq j\leq n$. (If $i=1$ then we use the bound~\eqnref{spacing1} while if $i>1$ then we use the bound~\eqnref{spacing2}.)

Now, defining
\[\psi(i) = \frac{i}{ \frac{1}{\cmin}\cdot\left(\sqrt{3i\log((n^2+n)/\delta)}  +2\log((n^2+n)/\delta)\right) + \frac{4\Lip}{\cmin^3}\cdot i\cdot \sqrt{\frac{\log((n^2+n)/\delta)}{n}}},\]
we see that
\[\sw{x-y} =\max_{1\leq i\leq j\leq n}\big|\overline{x}_{i:j}-\overline{y}_{i:j}\big|\cdot\psi(j-i+1) \leq 1.\]
(Note that $\psi$ is nondecreasing and $i\mapsto i/\psi(i)$ is concave, as required by~\eqnref{psi}.)

Next we check that $x$ is a Lipschitz sequence. We have
\begin{multline*}n^{-1}\big(x_{i+1} - x_i \big)=  \left(G^{-1}\left(\frac{i+1}{n}\right) - G^{-1}(i/n) \right)+\left( G^{-1}\big((i-1)/n\big) - G^{-1}(i/n)\right) \\
= (G^{-1})'(i/n)\cdot \frac{1}{n} + \frac{1}{2}(G^{-1})''\left(\frac{i+s}{n}\right)\cdot\frac{1}{n^2}\\ + (G^{-1})'(i/n)\cdot -\frac{1}{n} + \frac{1}{2}(G^{-1})''\left(\frac{i-1+t}{n}\right)\cdot \frac{1}{n^2}\end{multline*}
for some $s,t\in[0,1]$, by Taylor's theorem. The first-order terms cancel, and we know by  \eqnref{Gderivs} that $(G^{-1})''$ is bounded by $\frac{\Lip}{\cmin^3}$.
Therefore, $x$ is $\frac{\Lip}{\cmin^3}$-Lipschitz. Finally, $x$ is monotone nondecreasing since $g$ is 
a monotone nonincreasing density.

We then apply the calculations~\eqnref{Lip1} for the Lipschitz signal setting
(with $\frac{\sw{x-y}}{\psi(m)}$ taking the place of $\sqrt{\frac{2\sigma^2\log\left(\frac{n^2+n}{\delta}\right)}{m}}$,
which was specific to the subgaussian noise model setting). We see that for any index $m\geq 1$,
\begin{multline*}
\max_{m\leq k\leq n-m+1} \big|x_k - \iso(y)_k\big| \leq \frac{\sw{x-y}}{\psi(m)} + \frac{\Lip(m-1)}{2n\cmin^3} \\
\leq  \frac{\frac{1}{\cmin}\cdot\left(\sqrt{3m\log((n^2+n)/\delta)}  +2\log((n^2+n)/\delta)\right) + \frac{4\Lip}{\cmin^3}\cdot m\cdot \sqrt{\frac{\log((n^2+n)/\delta)}{n}}}{m} + \frac{\Lip(m-1)}{2n\cmin^3} .
\end{multline*}
Set $m = \left\lceil \left(n \sqrt{\log((n^2+n)/\delta)}\right)^{2/3}\right\rceil$. Since $\Delta < \frac{1}{c+L} \leq 1$ we know $\log((n^2+n)/\delta)\leq n$,
so we can simplify the above bound to
 \[\max_{m\leq k\leq n-m+1} \big|x_k - \iso(y)_k\big| \leq  \left(\frac{4}{\cmin} + \frac{4.5\Lip}{\cmin^3}\right) \sqrt[3]{\frac{\log((n^2+n)/\delta)}{n}}.\]

Now we show how this uniform bound on the difference $x-y$,
translates to an error bound on the Grenander density estimator $\ggren$.
First, we check that $Z_{(m)}\leq \Delta$ and $Z_{(n-m+1)}\geq 1-\Delta$.
We have
\[1\geq \sw{x-y} \geq \psi(m)\cdot \big|\overline{x}_{1:m} - \overline{y}_{1:m}\big|
= \frac{\psi(m)}{m} \cdot n \cdot \big|G^{-1}(m/n) - Z_{(m)}\big|.\]
We also know that $G^{-1}(m/n)\leq \frac{m}{\cmin n}$ since $G^{-1}$ is $(1/\cmin)$-Lipschitz as calculated in~\eqnref{Gderivs},
and so
\begin{multline*}
Z_{(m)} \leq G^{-1}(m/n) + \frac{m}{n \psi(m)} \leq \\\frac{m}{\cmin n} + \frac{\sqrt{3m\log((n^2+n)/\delta)} + 2\log((n^2+n)/\delta)}{\cmin n} + \frac{4\Lip  m \sqrt{\log((n^2+n)/\delta)} }{\cmin^3 n^{3/2}}\\
\leq \left(\frac{5}{\cmin} + \frac{4\Lip}{\cmin^3}\right) \sqrt[3]{\frac{\log((n^2+n)/\delta)}{n}}\leq \Delta,
\end{multline*}
using the fact that $\log((n^2+n)/\delta)\leq n$ as before.
Similarly  $Z_{(n-m+1)}\geq 1-\Delta$.

Next, for any $t$ with $\Delta\leq t\leq 1-\Delta$, find index $k$ such that
\[Z_{(k-1)}< t \leq Z_{(k)}.\]
By the work above we will have $m\leq k \leq n-m+1$. Then 
$\ggren(t) = \frac{1}{\iso(y)_k}$
by definition of the Grenander estimator. Therefore, we have
\[
\big|\ggren(t) - g(t)\big|
= \left|\frac{1}{\iso(y)_k} - g(t)\right|\\
\leq  \left|\frac{1}{\iso(y)_k} - \frac{1}{x_k}\right| + \left|\frac{1}{x_k} -  g(t)\right|.\]
By \lemref{spacing}, we know 
\begin{equation*}
Z_{(k)} \leq G^{-1}\left(\frac{k}{n}\right) + \frac{4}{\cmin} \sqrt[3]{\frac{\log((n^2+n)/\delta)}{n}} 
\end{equation*}
and
\[
Z_{(k-1)} \geq G^{-1}\left(\frac{k-1}{n}\right) - \frac{4}{\cmin} \sqrt[3]{\frac{\log((n^2+n)/\delta)}{n}}
\]
so we have
\[  G^{-1}\left(\frac{k-1}{n}\right) - \frac{4}{\cmin} \sqrt[3]{\frac{\log((n^2+n)/\delta)}{n}} < t \leq G^{-1}\left(\frac{k}{n}\right) + \frac{4}{\cmin} \sqrt[3]{\frac{\log((n^2+n)/\delta)}{n}} \]
We calculate
\[x_k = n\left(G^{-1}\left(\frac{k}{n}\right)  - G^{-1}\left(\frac{k-1}{n}\right) \right) = n (G^{-1})'\left(\frac{k-1+s}{n}\right)\cdot \frac{1}{n} = \frac{1}{g\left(G^{-1}\left(\frac{k-1+s}{n}\right)\right)}\]
by Taylor's theorem for some $s\in[0,1]$, and so
\begin{multline*}\left|\frac{1}{x_k} -  g(t)\right| = \left|g\left(G^{-1}\left(\frac{k-1+s}{n}\right)\right) - g(t)\right|
\leq \Lip\cdot\left|G^{-1}\left(\frac{k-1+s}{n}\right) - t\right|\\ 
\leq \Lip \left|G^{-1}\left(\frac{k}{n}\right)  - G^{-1}\left(\frac{k-1}{n}\right)\right| +  \frac{4L}{\cmin} \sqrt[3]{\frac{\log((n^2+n)/\delta)}{n}} \\ \leq \frac{\Lip}{\cmin n}  +  \frac{4L}{\cmin} \sqrt[3]{\frac{\log((n^2+n)/\delta)}{n}},
\end{multline*}
since $g$ is $\Lip$-Lipschitz and $G^{-1}$ is $(1/\cmin)$-Lipschitz, as proved before.
Finally, 
\begin{equation*}
\begin{split}
& \left|\frac{1}{\iso(y)_k} - \frac{1}{x_k}\right|
 = \frac{\left|x_k - \iso(y)_k\right|}{x_k \cdot \iso(y)_k} \\ &
\leq \frac{ \left(\frac{4}{\cmin} + \frac{4.5\Lip}{\cmin^3}\right) \sqrt[3]{\frac{\log((n^2+n)/\delta)}{n}}}{x_k \cdot \iso(y)_k}
\leq \frac{ \left(\frac{4}{\cmin} + \frac{4.5\Lip}{\cmin^3}\right) \sqrt[3]{\frac{\log((n^2+n)/\delta)}{n}}}{x_k \cdot \left(x_k - \left(\frac{4}{\cmin} + \frac{4.5\Lip}{\cmin^3}\right) \sqrt[3]{\frac{\log((n^2+n)/\delta)}{n}}\right)},
\end{split}
\end{equation*}
from the bound on $\big|x_k - \iso(y)_k\big|$ above.
And, we know that
$x_k =  \frac{1}{g\left(G^{-1}\left(\frac{k-1+s}{n}\right)\right)}$
for some $s\in[0,1]$ as above, so $x_k \geq \frac{1}{\max_{s\in[0,1]}g(s)}$. Now, since $g$ is
lower-bounded by $\cmin$ and is $\Lip$-Lipschitz, we see that $g(s)\leq \cmin+\Lip$, and so $x_k\geq \frac{1}{\cmin+\Lip}$.
 Combining everything, 
\begin{multline*}
\big|\ggren(t) - g(t)\big|\leq \\ \frac{ \left(\frac{4}{\cmin} + \frac{4.5\Lip}{\cmin^3}\right) \sqrt[3]{\frac{\log((n^2+n)/\delta)}{n}}}{\frac{1}{\cmin+\Lip} \cdot \left(\frac{1}{\cmin+\Lip} - \left(\frac{4}{\cmin} + \frac{4.5\Lip}{\cmin^3}\right) \sqrt[3]{\frac{\log((n^2+n)/\delta)}{n}}\right)} + \frac{\Lip}{\cmin n} + \frac{4\Lip}{\cmin}\sqrt[3]{\frac{\log((n^2+n)/\delta)}{n}} \\ 
\leq \frac{ \left(\frac{1}{c}\left(4+\frac{5L}{c+L}\right) + \frac{4.5\Lip}{c^3}\right)\sqrt[3]{\frac{\log((n^2+n)/\delta)}{n}}}{\frac{1}{\cmin+\Lip} \cdot \left(\frac{1}{\cmin+\Lip} - \left(\frac{4}{\cmin} + \frac{4.5\Lip}{\cmin^3}\right) \sqrt[3]{\frac{\log((n^2+n)/\delta)}{n}}\right)}
\leq \frac{\Delta}{\frac{1}{\cmin+\Lip}(\frac{1}{\cmin+\Lip}-\Delta)}.
\end{multline*}

\subsection{Proofs of lemmas}\label{app:proofs_lemmas}

\begin{proof}[Proof of \lemref{SW_contract}]
By \thmref{iso_contract}, we only need to prove that $\sw{\cdot}$
satisfies NUNA. Fix any $x\in\R^n$ and any index $k=1,\dots,n-1$.
Let $y=A_k x$. Note that we have
\[y_i = \begin{cases} x_i,&\text{ if $i<k$ or $i>k+1$,}\\
\frac{x_k+x_{k+1}}{2},&\text{ if $i=k$ or $i=k+1$.}\end{cases}\]
Take any indices $1\leq i\leq j\leq n$. We need to prove that $|\overline{y}_{i:j}|\cdot\psi(j-i+1)\leq \sw{x}$.
\begin{itemize}
\item Case 1: if $j<k$ or if $i> k+1$, then neither of the indices $k,k+1$ are included in the window $i:j$,
and therefore $x_{i:j}=y_{i:j}$ (i.e.~all entries
in the stretch of indices $i:j$ are equal). So,
\[|\overline{y}_{i:j}|\cdot\psi(j-i+1) = |\overline{x}_{i:j}|\cdot \psi(j-i+1)\leq\sw{x}.\]
\item Case 2:
If $i\leq k$ and $j\geq k+1$, then both indices $k,k+1$ are included in the window $i:j$. Since $y_k+y_{k+1} = x_k+x_{k+1}$
and all other entries of $x$ and $y$ coincide,
we can trivially see that
\[|\overline{y}_{i:j}|\cdot\psi(j-i+1) = |\overline{x}_{i:j}|\cdot \psi(j-i+1)\leq\sw{x}.\]
\item Case 3: if $i<k$ and $j=k$, then
\begin{align*}
&|\overline{y}_{i:j}|\cdot \psi(j-i+1)
=|\overline{y}_{i:k}|\cdot \psi(k-i+1)\\
&=\frac{\left|\sum_{\ell=i}^{k-1} x_\ell+\frac{x_k+x_{k+1}}{2}\right|\cdot \psi(k-i+1)}{k-i+1}\\
&=\frac{\left|\frac{1}{2}\sum_{\ell=i}^{k-1} x_{\ell} + \frac{1}{2}\sum_{\ell=i}^{k+1} x_{\ell}\right|\cdot \psi(k-i+1)}{k-i+1}\\
&\leq \frac{\left|\frac{1}{2}\sum_{\ell=i}^{k-1} x_{\ell}\right|\cdot \psi(k-i+1)}{k-i+1} + \frac{\left|\frac{1}{2}\sum_{\ell=i}^{k+1} x_{\ell}\right|\cdot \psi(k-i+1)}{k-i+1}\\
&= \frac{\psi(k-i+1)}{k-i+1} \cdot \bigg (\frac{1}{2} |\overline{x}_{i:(k-1)}|\cdot \psi(k-i) \cdot \frac{k-i}{\psi(k-i)} \\ & + \frac{1}{2} |\overline{x}_{i:(k+1)}|\cdot \psi(k-i+2) \cdot \frac{k-i+2}{\psi(k-i+2)} \bigg) \\
&\leq \sw{x}\cdot \frac{1}{2}\left[\frac{k-i}{\psi(k-i)}+ \frac{k-i+2}{\psi(k-i+2)}\right]\cdot \frac{\psi(k-i+1)}{k-i+1}\\
&\leq\sw{x} \cdot \frac{k-i+1}{\psi(k-i+1)} \cdot \frac{\psi(k-i+1)}{k-i+1} = \sw{x},
\end{align*}
where the last inequality holds since $i\mapsto i/\psi(i)$ is concave by assumption on $\psi$.
\item Case 4: if $i=k+1$ and $j>k+1$, by symmetry this case is analogous to Case 3.
\item Case 5: if $i=j=k$, then
\begin{equation*}
\begin{split}
|\overline{y}_{i:j}|\cdot \psi(j-i+1) & = |y_k|\cdot \psi(1) = |x_k + x_{k+1}| \cdot \psi(1)/2 \\ & = |\overline{x}_{k:(k+1)}|\cdot \psi(1)\leq |\overline{x}_{k:(k+1)}| \cdot\psi(2)\leq\sw{x},
\end{split}
\end{equation*}
since $\psi(1)\leq \psi(2)$ due to the assumption that $\psi$ is nondecreasing.
\item Case 6: if $i=j=k+1$, then by symmetry this
case is analogous to Case 5.
\end{itemize}
Therefore, $|\overline{y}_{i:j}|\cdot \psi(j-i+1)\leq \sw{x}$ for all indices $1\leq i\leq j\leq n$,
and so $\sw{y}\leq\sw{x}$, as desired.
\end{proof}

\begin{proof}[Proof of \lemref{SW_subg}]
For any indices $1\leq i\leq j\leq n$,
\[\overline{y}_{i:j} - \overline{x}_{i:j} = \sigma \overline{\eps}_{i:j},\]
and we know that $\sqrt{j-i+1}\cdot \overline{\eps}_{i:j}$ is subgaussian, that is,
\[\PP{\sqrt{j-i+1}\cdot\big|\overline{\eps}_{i:j}\big|>t}\leq 2e^{-t^2/2}\]
for any $t\geq 0$. Now we set $t=\sqrt{2\log\left(\frac{n^2+n}{\delta}\right)}$, and take a union bound over all $n + {n\choose 2}= \frac{n^2+n}{2}$ possible pairs 
of indices $i\leq j$. We then have
\[\PP{\max_{1\leq i\leq j\leq n}\sqrt{j-i+1}\cdot\big|\overline{\eps}_{i:j}\big|\leq \sqrt{2\log\left(\frac{n^2+n}{\delta}\right)}}\geq 1-\delta.\]
Setting $\psi(t) = \sqrt{t}$ proves that, on this event, $\sw{x-y}\leq \sigma\sqrt{2\log\left(\frac{n^2+n}{\delta}\right)}$, as desired.
For the bound in expectation, we have a similar calculation: it is known that $\EE{\max_{k=1,\dots,N} |Z_k|}\leq \sqrt{2\log(2N)}$ and $\EE{\max_{k=1,\dots,N} |Z_k|^2}\leq 8\log(2N)$  for any (not necessarily
independent) subgaussian random variables $Z_k$. Setting $Z_k = \sqrt{j-i+1}\cdot \overline{\eps}_{i:j}$ for each of the $N=\frac{n^2+n}{2}$ possible pairs $i,j$, we obtain
\[\EE{\max_{1\leq i\leq j\leq n}\sqrt{j-i+1}\cdot\big|\overline{\eps}_{i:j}\big|}\leq \sqrt{2\log(n^2+n)}\]
and
\[\EE{\Big(\max_{1\leq i\leq j\leq n}\sqrt{j-i+1}\cdot\big|\overline{\eps}_{i:j}\big|\Big)^2}\leq 8\log(n^2+n).\]
\end{proof}

\begin{proof}[Proof of \lemref{eiso}]
Assume that $x$ is $\eiso$-monotone, and fix any index $1\leq i \leq n$.
Let $j = \max\{k\leq n: \iso(x)_k = \iso(x)_i\}$. Then $i\leq j\leq n$, $\iso(x)_i = \iso(x)_j$,  and either $j=n$ or $\iso(x)_j<\iso(x)_{j+1}$. Therefore,
 we must have $x_j \leq \iso(x)_j$ by properties of the isotonic projection. (This is because, if $x_j>\iso(x)_j$, then writing $\ee{j}$ for the $j$th
 basis vector and taking some sufficiently small $\eps>0$, the vector $\iso(x) + \eps\cdot\ee{j}$ is an isotonic vector that is strictly closer
 to $x$ than $\iso(x)$, which is a contradiction.)
Therefore, $x_i \leq x_j + \eiso \leq \iso(x)_j + \eiso =\iso(x)_i + \eiso$. The reverse bound is proved similarly.

Now we turn to the converse. For any $1\leq i\leq j\leq n$, we have $x_i \leq \iso(x)_i + \eps \leq \iso(x)_j + \eps \leq x_j + 2\eps$,
where the first and third inequalities use the bound $\norm{x-\iso(x)}_{\infty}\leq \eps$, while the second uses the fact
that $\iso(x)$ is monotone.
\end{proof}

\begin{proof}[Proof of \lemref{slow_pava}]
For $i=1,\dots,n-1$, let $\ck_i = \{x\in\R^n: x_i\leq x_{i+1}\}$, which is a closed convex cone in
$\R^n$. We have $\ck_{\iso}  = \cap_{i=1}^{n-1} \ck_i $
and it's easy to see that $\iso_i(x) = \pr{\ck_i}(x)$. Hence the
slow projection algorithm defined in~\eqnref{slow_pava}
is actually a cyclic projection algorithm, that is, the iterates are given by
\[x^{(1)} = \pr{\ck_1}(x^{(0)}), \ \  x^{(2)} = \pr{\ck_2}(x^{(1)}),\ \  \dots,  \ \  x^{(n)} = \pr{\ck_1}(x^{(n-1)}),\ \  \dots.\]
In general, it is known that
a cyclic projection algorithm starting at some point $x=x^{(0)}$ is guaranteed to converge to some point in the intersection of the respective convex
sets, i.e.~$\lim_{t\rightarrow \infty} x^{(t)} = x^*\in\cap_{i=1}^{n-1}\ck_i = \ck_\iso$, but without any assumptions on the nature of the convex
sets $\ck_i$, this point may not  necessarily be the projection of $x$ onto the intersection of the sets
(\citet{bregman1965method,han1988successive}). Therefore, we need to check that for our specific choice of the sets $\ck_i$,
the cyclic projection algorithm~\eqnref{slow_pava}
in fact converges to $\iso(x)$ as claimed in the lemma.

We first claim that
\begin{equation}\label{eqn:iso_iter}\iso\big(\iso_i(x)\big) = \iso(x)\end{equation}
for all $x\in\R^n$ and all $i=1,\dots, n-1$. Assume for now that this is true.
Since $\iso(\cdot)$ is contractive with respect to the $\ell_2$ norm, the convergence $x^{(t)}\rightarrow x^*$ implies
that $\iso(x^{(t)})\rightarrow \iso(x^*)$.
Applying~\eqnref{iso_iter} inductively, we know that $\iso(x^{(t)}) = \iso(x^{(0)})=\iso(x)$ for all $t\geq 1$. On the other
hand, since $x^*\in\ck_{\iso}$, this means that $x^* = \iso(x^*)$. Combining everything, then, we obtain
\[\lim_{t\rightarrow \infty}x^{(t)} = x^* = \iso(x^*) = \lim_{t\rightarrow \infty}\iso(x^{(t)}) = \iso(x).\]

Finally, we need to prove~\eqnref{iso_iter}. Fix any index $i$ and any $x\in\R^n$.
If $x_i\leq x_{i+1}$, then $\iso_i(x)=x$ and the statement
holds trivially. If not, then $x_i>x_{i+1}$ and we have $\iso_i(x) = A_i x$ (recalling the definition
of $A_i$ in~\eqnref{Ai} earlier).
Now let $y=\iso(x)$ and $z=\iso(A_i x)$.
It is trivially true that, since $x_i>x_{i+1}$, we must have $y_i =y_{i+1}$. Also, $\inner{x-y}{z-y}\leq 0$ by properties
of projection to the convex set $\ck_\iso$, so we can calculate
\begin{equation}\label{eqn:innerprod}
\begin{split}
\inner{A_i x - y}{z-y} & = \inner{x-y}{z-y}  + \frac{x_{i+1}-x_i}{2}\cdot (z_i - y_i - z_{i+1}+y_{i+1})
 \\ & \leq \frac{x_{i+1}-x_i}{2}\cdot (z_i - z_{i+1}) \leq 0,
 \end{split}
 \end{equation}
where the last step holds since $z_i\leq z_{i+1}$ due to $z\in\ck_\iso$ and $x_i\geq x_{i+1}$ by assumption.
We also have $\norm{A_i x - z}^2_2\leq \norm{A_i x - y}^2_2$ since $z=\iso(A_i x)$, which combined
with~\eqnref{innerprod} proves that $y=z$.
Thus~\eqnref{iso_iter} holds, as desired.
\end{proof}

\begin{proof}[Proof of \lemref{spacing}]
Let $U_1,\dots,U_n\iidsim \textnormal{Unif}[0,1]$, and let $G(t) =\int_{s=0}^t g(s)\;\mathsf{d}s$
be the cumulative distribution function for the density $g$. Since $g\geq \cmin > 0$, $G:[0,1]\rightarrow[0,1]$ is strictly
increasing, and is therefore invertible. It is known that setting $Z_{(i)}=G^{-1}(U_{(i)})$ recovers
the desired distribution for the ordered sample points $Z_{(1)}\leq\dots\leq Z_{(n)}$.

Next, by \lemref{order_unif} below, with probability at least $1-\delta$, for all indices $0\leq i<j\leq n$,
\begin{equation}\label{eqn:Ubd1}\left|U_{(i)}-U_{(j)} - \frac{i-j}{n}\right| \leq \frac{\sqrt{3|i-j|\log((n^2+n)/\delta)} +2\log((n^2+n)/\delta) }{n}.\end{equation}
From this point on, assume that this bound holds. In particular, by taking $i=0$, this implies that
\begin{equation}\label{eqn:Ubd2}\left|U_{(j)} - \frac{j}{n}\right| \leq \frac{\sqrt{3j \log((n^2+n)/\delta)} +2\log((n^2+n)/\delta) }{n} \leq 4\sqrt{\frac{\log((n^2+n)/\delta)}{n}},\end{equation}
for all $j=1,\dots,n$,
by assuming that $ \log((n^2+n)/\delta)\leq n$ (if not, then this bound holds trivially since $U_{(j)}$ and $j/n$ both
lie in $[0,1]$).

Then, since $g$ is $\Lip$-Lipschitz, for $1\leq i< j\leq n$ we compute
\begin{multline*}
\left|\big(Z_{(i)} - Z_{(j)}\big) -\left( G^{-1}(i/n) -  G^{-1}(j/n)\right)\right| \\
=\left|\big(G^{-1}(U_{(i)}) - G^{-1}(U_{(j)})\big) -\left( G^{-1}(i/n) -  G^{-1}(j/n)\right)\right|\\
=\left|(U_{(i)} - U_{(j)})\cdot (G^{-1})'\left(sU_{(i)} + (1-s)U_{(j)}\right) -  \frac{i-j}{n}\cdot (G^{-1})' \left(\frac{si + (1-s)j}{n}\right)\right|,
\end{multline*}
where the last step holds by Taylor's theorem applied to the function 
\[s\mapsto \left(G^{-1}\left(sU_{(i)}+(1-s)U_{(j)}\right) - G^{-1}\left(\frac{si + (1-s)j}{n}\right)\right).\]
We can rewrite this as 
\begin{multline*}
\left|\big(Z_{(i)} - Z_{(j)}\big) -\left( G^{-1}(i/n) -  G^{-1}(j/n)\right)\right| \\
\leq\left|(U_{(i)} - U_{(j)}) - \frac{i-j}{n}\right|\cdot (G^{-1})'\left(sU_{(i)} + (1-s)U_{(j)}\right) +\\ \frac{j-i}{n}\cdot \left|(G^{-1})'\left(sU_{(i)} + (1-s)U_{(j)}\right)  -  (G^{-1})' \left(\frac{si + (1-s)j}{n}\right)\right|,
\end{multline*}
Since $G^{-1}$ has bounded first and second derivatives as in~\eqnref{Gderivs}, we then have
\begin{multline*}
\left|\big(Z_{(i)} - Z_{(j)}\big) -\left( G^{-1}(i/n) -  G^{-1}(j/n)\right)\right| \\
\leq\left|(U_{(i)} - U_{(j)}) - \frac{i-j}{n}\right|\cdot \frac{1}{\cmin}+ \frac{j-i}{n}\cdot \frac{\Lip}{\cmin^3}\cdot \left|\left(sU_{(i)} + (1-s)U_{(j)}\right)-\frac{si + (1-s)j}{n}\right|\\
\leq\frac{\sqrt{3(j-i)\log((n^2+n)/\delta)}  +2\log((n^2+n)/\delta)}{n}\cdot \frac{1}{\cmin}+ \frac{j-i}{n}\cdot \frac{4\Lip}{\cmin^3}\cdot \sqrt{\frac{\log((n^2+n)/\delta)}{n}},
\end{multline*}
applying the bounds obtained above in~\eqnref{Ubd1} and~\eqnref{Ubd2}. 
This proves the bound~\eqnref{spacing2} in the lemma. To prove the simpler bound~\eqnref{spacing1},
we calculate
\[\left|Z_{(i)} - G^{-1}(i/n)\right| = \left|G^{-1}(U_{(i)}) - G^{-1}(i/n)\right|\leq \frac{1}{c}\left|U_{(i)} - i /n\right| \leq \frac{4}{c}\sqrt{\frac{\log((n^2+n)/\delta)}{n}},\]
since $G^{-1}$ is $(1/c)$-Lipschitz and we can apply~\eqnref{Ubd2}.
\end{proof}

\begin{lemma}\label{lem:unif1}
Let $U_{(1)}\leq\dots\leq U_{(n)}$ be the order statistics of $U_1,\dots,U_n\iidsim\textnormal{Unif}[0,1]$.
For any  $\delta>0$,
\[\PP{\left| U_{(i)} - \frac{i}{n}\right| \leq \frac{\sqrt{3i\log(2n/\delta)} + 2\log(2n/\delta)}{n}\text{ for all $i=1,\dots,n$}}\geq 1-\delta.\]
\end{lemma}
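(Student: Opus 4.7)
The plan is to reduce the order statistic $U_{(i)}$ to a binomial count via the empirical CDF. For each $t \in [0,1]$, define $S_t = \sum_{j=1}^n \One{U_j \leq t} \sim \textnormal{Binom}(n,t)$, and use the equivalences $\{U_{(i)} > t\} = \{S_t \leq i-1\}$ together with $\{U_{(i)} < t\} \subseteq \{S_t \geq i\}$. Setting $L = \log(2n/\delta)$ and targeting the closed form $\Delta_i = (\sqrt{3iL} + 2L)/n$, it suffices to show each of the two tails fails with probability at most $\delta/(2n)$ at each index, and then union bound over $i = 1, \ldots, n$.

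For the upper tail, I would take $t = i/n + \Delta_i$, so $\EE{S_t} = i + n\Delta_i$ and the target is a downward deviation from the mean of size $n\Delta_i$. The left-tail Chernoff bound $\PP{S \leq \mu - \lambda} \leq \exp(-\lambda^2/(2\mu))$ for a Binomial of mean $\mu$ yields $\PP{S_t \leq i} \leq \exp\!\left(-(n\Delta_i)^2/(2(i+n\Delta_i))\right)$. Requiring this to be at most $e^{-L}$ is a quadratic in $y = n\Delta_i$ whose positive root is $L + \sqrt{L^2 + 2Li}$; by $\sqrt{a+b} \leq \sqrt{a}+\sqrt{b}$, this is dominated by $2L + \sqrt{2Li}$, which is in turn dominated by the stated value $2L + \sqrt{3iL}$.

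For the lower tail, if $\Delta_i \geq i/n$ the event $\{U_{(i)} < i/n - \Delta_i\}$ is vacuous since $U_{(i)} \geq 0$. Otherwise I would set $\mu = i - n\Delta_i > 0$ and apply Bernstein's inequality $\PP{S - \mu \geq \lambda} \leq \exp(-\lambda^2/(2\mu + (2/3)\lambda))$ (using $np(1-p) \leq \mu$) to obtain $\exp\!\left(-(n\Delta_i)^2/(2i - (4/3)n\Delta_i)\right)$. A similar quadratic calculation shows $n\Delta_i \geq \sqrt{2iL}$ already suffices here, so our choice is again more than enough.

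The main (mild) obstacle is the bookkeeping between the two tails: matching the single closed form $\sqrt{3iL} + 2L$ against two slightly different quadratic conditions, and splitting the lower-tail analysis on whether $\Delta_i \geq i/n$ so that the Bernstein denominator $2i - (4/3) n\Delta_i$ remains positive. Beyond that, the argument is routine binomial concentration followed by a union bound.
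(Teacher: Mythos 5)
Your proof is correct and follows essentially the same route as the paper's: reduce the order statistic event to a binomial tail via the empirical CDF, apply a Chernoff-type concentration bound to each tail with failure probability $\delta/(2n)$, and union bound over $i$. The only cosmetic difference is that the paper uses multiplicative Chernoff bounds for both tails while you use additive Chernoff for one side and Bernstein for the other; these are equivalent up to constants and both close the quadratic in $n\Delta_i$ in the same way.
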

\begin{proof}[Proof of \lemref{unif1}] 
Fix any index $i$. 
If $i<3\log(2n/\delta)$, then 
\[\frac{i}{n} - \frac{\sqrt{3i\log(2n/\delta)}}{n} \leq 0\]
and so trivially we have $U_{(i)} \geq \frac{i}{n} - \frac{\sqrt{3i\log(2n/\delta)}}{n}$.
If instead $i\geq 3\log(2n/\delta)$, then
suppose that $U_{(i)}\leq \frac{i}{n} - \frac{\sqrt{3i\log(2n/\delta)}}{n}\eqqcolon p$. This means
that at least $i$ many of the $U_k$'s lie in the interval  $[0,p]$.
Then
\begin{multline*}
\PP{U_{(i)}\leq \frac{i}{n} - \frac{\sqrt{3i\log(2n/\delta)}}{n}}
= \PP{U_{(i)}\leq p} = \PP{\textnormal{Binomial}(n,p)\geq i}\\
=\PP{\textnormal{Binomial}(n,p)\geq np\cdot\left(1 + \frac{\sqrt{3i\log(2n/\delta)}}{i-\sqrt{3i\log(2n/\delta)}}\right)}\\
\leq \exp\left\{-\frac{1}{3} np \left(\frac{\sqrt{3i\log(2n/\delta)}}{i-\sqrt{3i\log(2n/\delta)}}\right)^2\right\}
= \exp\left\{-\frac{1}{3}  \frac{\left(\sqrt{3i\log(2n/\delta)}\right)^2}{i-\sqrt{3i\log(2n/\delta)}}\right\} \leq \frac{\delta}{2n},
\end{multline*}
where  the  inequality uses the multiplicative Chernoff bound. 
Next, suppose that  instead we have
\[U_{(i)} \geq \frac{i}{n} + \frac{\sqrt{3i\log(2n/\delta)} + 2\log(2n/\delta)}{n} \eqqcolon p'.\]
This means that at most $i-1$ of the $U_k$'s lie  in the interval $[0,p']$. 
Then
\begin{eqnarray*}
& & \PP{U_{(i)}\geq \frac{i}{n} +\frac{\sqrt{3i\log(2n/\delta)}+ 2\log(2n/\delta)}{n}} =  \PP{U_{(i)}\geq p'}  \\ 
 & \leq &\PP{\textnormal{Binomial}(n,p')\leq i}\\
 & = & \PP{\textnormal{Binomial}(n,p')\leq np'\cdot\left(1 - \frac{\sqrt{3i\log(2n/\delta)}+2\log(2n/\delta)}{i+\sqrt{3i\log(2n/\delta)}+2\log(2n/\delta)}\right)}\\
& \leq & \exp\left\{-\frac{1}{2} np' \left(\frac{\sqrt{3i\log(2n/\delta)}+2\log(2n/\delta)}{i+\sqrt{3i\log(2n/\delta)}+2\log(2n/\delta)}\right)^2\right\}\\
& = & \exp\left\{-\frac{1}{2}  \frac{\left(\sqrt{3i\log(2n/\delta)}+2\log(2n/\delta)\right)^2}{i+\sqrt{3i\log(2n/\delta)}+2\log(2n/\delta)}\right\} \leq \frac{\delta}{2n},
\end{eqnarray*}
where again the first inequality uses the multiplicative Chernoff bound. 
Combining these two calculations,
\[\PP{\left|U_{(i)} - \frac{i}{n}\right|\leq \frac{\sqrt{3i\log(2n/\delta)}+2\log(2n/\delta)}{n}}\geq 1- \delta/n.\]
Finally, taking a union bound over all $i$, we have proved the lemma.
\end{proof}

\begin{lemma}\label{lem:order_unif}
Let $U_{(1)}\leq\dots\leq U_{(n)}$ be the order statistics of $U_1,\dots,U_n\iidsim\textnormal{Unif}[0,1]$,
and let $U_{(0)}=0$.
For any $\delta>0$,
\begin{equation*}
\begin{split}
& \mathbb{P}\Bigg \{\left|U_{(i)}-U_{(j)} - \frac{i-j}{n}\right| \leq \frac{\sqrt{3|i-j|\log((n^2+n)/\delta)}  +2\log((n^2+n)/\delta)}{n} \\ &
\text{ for all $0\leq i< j\leq n$} \Bigg \}\geq 1-\delta.
\end{split}
\end{equation*}
\end{lemma}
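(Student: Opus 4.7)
The plan is to reduce the difference $U_{(j)} - U_{(i)}$ to a single order statistic via the exchangeability of uniform spacings, then redo the Chernoff argument of \lemref{unif1} with a log-factor chosen to absorb a union bound over $O(n^2)$ pairs. Specifically, setting $U_{(n+1)} := 1$, the spacings $S_k := U_{(k)} - U_{(k-1)}$ for $k = 1, \ldots, n+1$ are uniformly distributed on the $n$-simplex (equivalently, $\mathsf{Dirichlet}(1, \ldots, 1)$), and hence are exchangeable. Since $U_{(j)} - U_{(i)} = S_{i+1} + \cdots + S_j$ is a sum of $j - i$ of these spacings, it has the same distribution as $S_1 + \cdots + S_{j-i} = U_{(j-i)}$. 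This distributional identity holds for all $0 \leq i < j \leq n$ (with the convention $U_{(0)} = 0$, and in particular it recovers the $i=0$ case handled directly by \lemref{unif1}).

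Next, for each fixed pair $(i, j)$, I would repeat the multiplicative Chernoff bound used in \lemref{unif1}, applied to $U_{(j-i)}$, but with the parameter $L := \log((n^2+n)/\delta)$ in place of $\log(2n/\delta)$. Writing $\PP{U_{(j-i)} \leq p} = \PP{\textnormal{Binomial}(n, p) \geq j-i}$ and bounding both tails gives
\[
\PP{\left|U_{(j)} - U_{(i)} - \frac{j-i}{n}\right| > \frac{\sqrt{3(j-i) L} + 2 L}{n}} \leq 2 e^{-L} = \frac{2\delta}{n^2 + n}.
\]
The arithmetic of this Chernoff computation is identical to that in the proof of \lemref{unif1}, simply with the log factor replaced, so no new calculation is needed.

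Finally, a union bound over the $\binom{n+1}{2} = n(n+1)/2$ pairs $0 \leq i < j \leq n$ gives total failure probability at most $\delta$, establishing the lemma. The only nontrivial ingredient is the exchangeability reduction in the first step; without it, one would be forced to bound $\PP{U_{(j)} - U_{(i)} \geq t}$ by conditioning on $U_{(i)}$ and invoking the scaling property of uniform order statistics on $[U_{(i)}, 1]$, which is more cumbersome. Once exchangeability is in hand, everything reduces to a repetition of the earlier Chernoff computation with the log constant chosen precisely to absorb the $O(n^2)$ pair count.
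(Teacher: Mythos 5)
Your proposal is correct and follows essentially the same route as the paper: both reduce $U_{(j)}-U_{(i)}$ to having the same distribution as $U_{(j-i)}$, then invoke the Chernoff calculation of \lemref{unif1} with the larger log-factor $\log((n^2+n)/\delta)$, and finish with a union bound over $\binom{n+1}{2}$ pairs. The only cosmetic difference is that the paper justifies the distributional identity by quoting the fact $U_{(j)}-U_{(i)}\sim\textnormal{Beta}(j-i,\,(n+1)-(j-i))$, while you derive the same identity more explicitly via exchangeability of the uniform spacings.
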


\begin{proof}[Proof of \lemref{order_unif}]
First, it is known that $U_{(j)}-U_{(i)}\sim\textnormal{Beta}\big((j-i),(n+1)-(j-i)\big)$ for all $0\leq i<j\leq n$.
In particular, $U_{(j)}-U_{(i)}$ has the same distribution as $U_{(j-i)}$, and so by \lemref{unif1}  applied with $k=j-i$ and with $2\delta/(n^2+n)$ in place of $\delta/n$,
\[\PP{\left|U_{(j)} - U_{(i)} - \frac{j-i}{n}\right| > \frac{\sqrt{3|i-j|\log((n^2+n)/\delta) }+2\log((n^2+n)/\delta)}{n}}\leq \frac{2\delta}{n^2+n}.\]
Taking a union bound over all ${n+1\choose 2}= \frac{n^2+n}{2}$ pairs of indices $i,j$, then, we obtain the desired bound.
\end{proof}

\end{document}